\documentclass{amsart}
\usepackage[english]{babel}
\usepackage{amsthm}
\usepackage{inputenc}
\usepackage{amssymb}
\usepackage{graphicx}
\tolerance=5000 \topmargin -1cm \oddsidemargin=0,5cm
\evensidemargin=-0,2cm \textwidth 15.6cm \textheight 24cm
\linespread{1.0}
\vfuzz2pt 
\hfuzz2pt 
\newtheorem{thm}{Theorem}[section]

\newtheorem{lem}[thm]{Lemma}
\newtheorem{prop}[thm]{Proposition}
\newtheorem{defn}[thm]{Definition}
\newtheorem{exam}[thm]{Example}
\newtheorem{rem}[thm]{Remark}
\numberwithin{equation}{section}

\def\ll{\mathcal{L}}
\begin{document}

\title[The classification of 4-dimensional Leibniz algebras]{The classification of 4-dimensional Leibniz algebras}%
\author{Elisa M. Ca\~{n}ete,  Abror Kh. Khudoyberdiyev}

%

\begin{abstract}
This paper is a contribution to the development of the non associative algebras theo\-ry. More precisely, this work deals with the classification of the complex $4$-dimensional Leibniz algebras. Note that the classification of $4$-dimensional nilpotent complex Leibniz algebras was obtained
in \cite{3-dim}. Therefore we will only consider non nilpotent case in this work.
\end{abstract}
\maketitle
\section{Introduction.}
Non associative algebras appear at the beginning of the twentieth century as a consequence of the development of quantum mechanics.
Pascual Jordan, John von Neumann and Eugene Wigner were the first researchers in introducing this kinds of algebras - in particular,
Jordan algebras - in 1934 and then, Jean-Louis Loday introduced the Leibniz algebras in 1993 in his cyclic homology study \cite{loday}. Loday explained in his study that the main property of the Lie brackets in the homologic study was actually the Leibniz property. In this way, the Leibniz algebras are the natural generalization of the Lie one. Moreover, Leibniz algebras inherit an important property of Lie algebras which is that the right multiplication operator of a Leibniz algebra is a derivation.

Active investigations on Leibniz algebras theory show that many results of the theory of Lie algebras can be extended to Leibniz one. Of course, distinctive properties of non-Lie Leibniz algebras have been also studied \cite{AAO1,AyOm1,AyOm2,Bar}.

 Thanks to Levi-Malcev's theorem, the solvable Lie algebras have played an important role in ma\-the\-matics over the last years, either in the classification theory or in geometrical and analytical appli\-cations. The investigation of this kind of algebras with some special types of nilradicals comes from different problems in Physics and was the subject of various paper \cite{Ancochea,Boyko,Campoamor,Muba,Tremblay,Wang}. In Leibniz algebras, the analogue of Levi-Malcev's theorem was proved in recent times in \cite{Bar}, thus solvable Leibniz algebras still play a central role and have been recently studied by several authors, like Casas, Ladra, Karimjanov and Omirov. In their recent works, some interesting results have been obtained. In particular, they have tackled the classification of $n$-dimensional solvable Leibniz algebras with some restriction in the nilradical (see \cite{Casas, Casas2}).

 The classification, up to isomorphism, of any class of algebras is a fundamental and very difficult problem. It is one of the first problem that one
encounters when trying to understand the structure of a member of this class of algebras.

The purpose of the present work is to continue the study of Leibniz algebras. Since the description of the whole $n$-dimensional family seems to be unsolvable, we reduce our discussion to the restriction on the dimension of it. Therefore we reduce our attention to the classification of $4$-dimensional Leibniz algebras. According to the analogue to Levi-Malcev's theorem, our aim lies in the solvable family because the description of simple
Leibniz algebras immediately follows. More exactly, Rakimov, Omirov and Turdibaev have proved in \cite{Rak} that there exists only one non
solvable $4$-dimensional Leibniz algebra $sl_2\oplus
\mathbb{C}=\{e,f,h,x\}.$
$$sl_2\oplus
\mathbb{C}: \ [e,h]=-[h,e]=2e, \quad [h,f]=-[f,h]=2f, \quad
[e,f]=-[f,e]=h.$$

 Moreover, Albeverio, Omirov and Rakhimov's work \cite{3-dim} has been fundamental in this research, where the classification of $4$-dimensional nilpotent complex Leibniz algebras was obtained. Therefore we focus our attention in the study of the $4$-dimensional non-nilpotent solvable Leibniz algebras.

To the description of $4$-dimensional solvable Leibniz algebras we use the method for describing solvable Lie algebras with given nilradical by
 means of non-nilpotent outer derivations of the nilradical.

Note that this method has been used for Lie algebra in several papers\cite{AnCaGa,NdWi,SnWi}, and for Leibniz ones has been used, for instance, in \cite{Casas, Casas2}.

\section{Preliminaries.}

Let us introduce some definitions and notations, all of them
necessary for the understanding of this work.

\begin{defn}
A Leibniz algebra over $K$ is a vector space $\mathcal{L}$ equipped with a bilinear map, called bracket,
$$[-,-]:  \mathcal{L} \times  \mathcal{L} \rightarrow  \mathcal{L} $$
satisfying the Leibniz identity:
$$[x,[y,z]]=[[x,y],z]-[[x,z],y],$$
for all $x,y,z \in  \mathcal{L}.$
\end{defn}
\begin{exam}
Any Lie algebra is a Leibniz algebra.
\end{exam}

From now on Leibniz algebras $\ll$ will be considered over the
field of complex numbers $\mathbb{C}$, and with finite dimension.

The set $R(\ll)=\{x \in \ll: [y,x]=0, \ \forall y \in \ll\}$ is
called \emph{the right annihilator of $\ll$}. Notice that for any
$x, y \in \ll$ the elements $[x,x]$ and $[x,y]+ [y,x]$ are always
in $R(\ll)$ and that $R(\ll)$ is always an ideal of $\ll$.

For a given Leibniz algebra $(\ll,[-,-])$ the sequences of
two-sided ideals defined recursively as follows:
\[\ll^1=\ll, \ \ll^{k+1}=[\ll^k,\ll],  \ k \geq 1, \qquad \qquad
\ll^{[1]}=\ll, \ \ll^{[s+1]}=[\ll^{[s]},\ll^{[s]}], \ s \geq 1.
\]
are said to be the lower central and the derived series of $\ll$,
respectively.

\begin{defn} A Leibniz algebra $L$ is said to be
nilpotent (respectively, solvable), if there exists $n\in\mathbb
N$ ($m\in\mathbb N$) such that $\ll^{n}=0$ (respectively,
$\ll^{[m]}=0$). The minimal number $n$ (respectively, $m$) with
such property is said to be the index of nilpotency (respectively,
of solvability) of the algebra $\ll$.
\end{defn}

Evidently, the index of nilpotency of an $n$-dimensional algebra is not greater than $n+1$.

 \begin{defn}
  The maximal nilpotent ideal of a Leibniz algebra is said to be the nilradical of the algebra.
  \end{defn}

\begin{defn} A linear map $d \colon \ll \rightarrow \ll$ of a Leibniz algebra $(\ll,[-,-])$ is said to be a
 derivation if for all $x, y \in \ll$, the following  condition holds: \[d([x,y])=[d(x),y] + [x, d(y)] \,.\]
\end{defn}

Let $\ll$ be a Leibniz algebra, the set $R(\ll)=\{x \in \ll: [y,x]=0, \ \forall y \in \ll\}$ is called \emph{the right annihilator of $\ll$}. Notice that for any $x, y \in \ll,$ the elements $[x,x]$ and $[x,y]+ [y,x]$ are always in $R(\ll)$ and that $R(\ll)$ is always an ideal of $\ll$.

For a given $x \in \ll,$ $R_x$ denotes the map $R_x: \ll \rightarrow \ll$ such that $R_x(y)=[y,x], \ \forall x \in \ll$.
Note that the map $R_x$ is a derivation. We call this kind of derivations as inner derivations. Derivations that are not inner are said to be outer derivations.

\begin{defn}\cite{Muba}
Let $d_1, d_2, \dots, d_n$ be derivations of a Leibniz algebra
$\ll.$ The derivations $d_1, d_2, \dots, d_n$ are said to be
nil-independent if $$\alpha_1d_1 + \alpha_2d_2+ \dots +\alpha_nd_n$$
is not nilpotent for any scalars $\alpha_1, \alpha_2, \dots
,\alpha_n \in \mathbb{F}.$ In other words, if for any $\alpha_1, \alpha_2, \dots ,\alpha_n \in \mathbb{F}$
there exists a natural number $k$ such that $$(\alpha_1d_1 + \alpha_2d_2+ \dots +\alpha_nd_n)^k=0 \hbox{ then } \alpha_1 = \alpha_2= \dots =\alpha_n.$$
\end{defn}

 Section $3$ of this paper is divided into two subsections.
 In the first one we study the $4$-dimensional solvable Leibniz algebras with
 $2$-dimensional nilradical, while in the second subsection we show an analogous study when the nilradical has dimension equals $3$.


\section{$4$-dimensional solvable Leibniz algebras}

Let $\ll$ be a solvable Leibniz algebra. Then it can be decomposed
in the form $\ll=N + Q,$ where $N$ is the nilradical and $Q$ is
the complementary vector space. Since the square of a solvable
algebra is a nilpotent ideal and the finite sum of nilpotent
ideals is a nilpotent too \cite{AyOm1}, then we get the
nilpotency of the ideal $\ll^2,$ i.e, $\ll^2 \subseteq N$ and
consequently, $Q^2 \subseteq N.$ Casas, Ladra, Omirov and
Karimjanov proved in \cite{Casas} the following theorem, that will
be very useful in this work.
\begin{thm}\label{nilr}
Let $\ll$ be a solvable Leibniz algebra and $N$ its nilradical. Then the dimension of the complementary vector space to $N$ is not greater than the maximal number of nil-indepedent derivations of N.
\end{thm}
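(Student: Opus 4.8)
The plan is to prove Theorem~\ref{nilr} by showing that the restriction of the right multiplication operators $R_q$ (for $q$ ranging over a basis of $Q$) to the nilradical $N$ furnishes a collection of nil-independent derivations of $N$, so that $\dim Q$ cannot exceed the maximal number of such derivations. Write $\ll = N \dot{+} Q$ with $Q = \langle x_1, \dots, x_s \rangle$, where $s = \dim Q$. The first step is to record that each $R_{x_i}$ is a derivation of $\ll$ (stated in the preliminaries), that $\ll^2 \subseteq N$ forces $R_{x_i}(\ll) \subseteq N$ and in particular $R_{x_i}(N) \subseteq N$, so that $d_i := R_{x_i}|_N$ is a well-defined derivation of $N$. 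Thus we have $s$ derivations $d_1, \dots, d_s$ of $N$, and it remains to show they are nil-independent; Theorem~\ref{nilr} then follows immediately from the definition, since $s$ is at most the maximal number of nil-independent derivations of $N$.

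For the nil-independence, suppose for contradiction that some nontrivial combination $d := \alpha_1 d_1 + \dots + \alpha_s d_s$ is nilpotent as an operator on $N$, with not all $\alpha_i$ zero. Set $x := \alpha_1 x_1 + \dots + \alpha_s x_s \in Q$, a nonzero element, so that $d = R_x|_N$. The idea is to build a nilpotent ideal of $\ll$ strictly larger than $N$, contradicting the maximality of the nilradical. The natural candidate is $I := N + \langle x \rangle$ (or the ideal generated by $N$ and $x$, which in fact equals $N \oplus \langle x\rangle$ as a subspace since $[\ll,\ll]\subseteq N$ ensures closure). I would verify that $I$ is an ideal: $[I, \ll] \subseteq [N,\ll] + [x,\ll] \subseteq N \subseteq I$ and $[\ll, I] \subseteq [\ll, N] + [\ll, x] \subseteq N \subseteq I$, using $\ll^2 \subseteq N$.

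The crux is then to show $I$ is nilpotent. The descending central series of $I$ is controlled by how multiplication by $x$ acts: schematically, $I^{k+1} = [I^k, I]$ will involve terms that are products inside $N$ (which die out because $N$ is nilpotent) and terms obtained by applying $R_x = d$ repeatedly to elements of $N$ (which die out because $d$ is assumed nilpotent on $N$). More precisely I would argue by a double-filtration / interleaving estimate: any sufficiently long product of elements of $I$ either accumulates enough factors from $N$ to vanish by nilpotency of $N$, or accumulates enough applications of $R_x$ to vanish by nilpotency of $d$; one also uses the Leibniz identity to push all the ``$x$-multiplications'' to one side and to control products of the form $[x, n]$ with $n \in N$ (note $[x,n] \in N$ too). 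Choosing the length of the product larger than a suitable function of the nilpotency indices of $N$ and of $d$ gives $I^{m} = 0$ for some $m$, so $I$ is a nilpotent ideal properly containing $N$ — contradiction. Hence no such nontrivial $d$ is nilpotent, the $d_i$ are nil-independent, and the theorem follows.

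The main obstacle I expect is precisely this last nilpotency argument: making the combinatorial bookkeeping rigorous, i.e.\ showing that in an arbitrarily long bracket monomial in the generators of $I$ one can always either locate a long sub-monomial lying in $N^{p}$ (for $p$ the nilpotency index of $N$) or extract a long iterated $R_x$-image $d^{q}(n)$ (for $q$ the nilpotency index of $d$). The Leibniz identity is the right tool to reorganize such monomials, but one must be careful that $\ll$ is only left-Leibniz, so the two multiplication orders behave differently and brackets $[x,n]$ versus $[n,x]$ must be tracked separately; additionally one should recall that elements $[y,y]$ and $[y,z]+[z,y]$ lie in $R(\ll)$, which can be used to simplify the symmetrization. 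Once the nilpotency bound on $I$ is established, everything else is routine.
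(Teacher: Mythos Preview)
The paper does not supply its own proof of this theorem: it is quoted verbatim from \cite{Casas} and used as a tool. So there is no in-paper argument to compare against. Your strategy is exactly the standard one (going back to Mubarakzjanov in the Lie case and carried over to Leibniz algebras in \cite{Casas}): restrict the operators $R_{x_i}$ to $N$, and if some nontrivial combination were nilpotent, enlarge $N$ to a bigger nilpotent ideal $I=N+\langle x\rangle$, contradicting maximality. Everything you wrote up to that point is correct.

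The only place where your write-up is genuinely incomplete is the one you flag yourself, the nilpotency of $I$. The ``double-filtration / interleaving'' bookkeeping you propose can be made to work, but there is a much cleaner route available in the setting of this paper (finite dimension over $\mathbb{C}$), and it avoids the left/right asymmetry you worry about. Observe first that each $R_n$ with $n\in N$ is nilpotent on all of $\ll$ (because $R_n(\ll)\subseteq N$ and $R_n(N^{j})\subseteq N^{j+1}$), and that $R_x$ is nilpotent on $\ll$ (because $R_x(\ll)\subseteq N$ and $R_x^{k}(\ll)\subseteq d^{\,k-1}(N)$). The Leibniz identity gives $[R_a,R_b]=-R_{[a,b]}$, so $\mathfrak g=\{R_a:a\in I\}$ is a Lie subalgebra of $\mathfrak{gl}(\ll)$ with $[\mathfrak g,\mathfrak g]\subseteq\{R_n:n\in N\}$; in particular $\mathfrak g$ is solvable. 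By Lie's theorem there is a basis of $\ll$ in which every $R_a$, $a\in I$, is upper triangular; since each $R_n$ and $R_x$ is nilpotent, they are in fact strictly upper triangular in that basis, and hence so is every $R_a$ with $a\in I$. Consequently any $(\dim\ll)$-fold product $R_{a_m}\cdots R_{a_1}$ vanishes, which gives $I^{m+1}=0$ directly from the right-normed description of the lower central series. This closes the gap without any delicate monomial combinatorics and without needing to track $L_x$ separately.
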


According to Theorem \ref{nilr}, we assert that the dimension of the nilradical of $4-$dimensional solvable Leibniz algebras
are equal to two or three. The classification of the two and three dimensional nilpotent Leibniz algebras
was obtained in \cite{Loday} and \cite{AAO1}, respectively. The following theorems show these classifications.

\begin{thm}\label{2dim-nil}\cite{Loday}
Let $\ll$ be a $2$-dimensional nilpotent Leibniz algebra. Then $\ll$ is an abelian algebra or it is isomorphic to
$$\mu_1: [e_1,e_1]=e_2$$
\end{thm}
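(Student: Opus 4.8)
The plan is to classify all $2$-dimensional nilpotent Leibniz algebras $\ll$ by a direct case analysis on the bracket structure, using the fact that nilpotency forces $\ll^2 \subsetneq \ll$ and hence $\dim \ll^2 \le 1$. First I would fix a basis $\{e_1, e_2\}$ of $\ll$ and observe that since $\ll$ is nilpotent of dimension $2$, its index of nilpotency is at most $3$, so $\ll^3 = [\ll^2, \ll] = 0$ and $\ll^2$ is either zero (the abelian case) or one-dimensional. In the abelian case we are done, so assume $\dim \ll^2 = 1$, say $\ll^2 = \langle e_2 \rangle$; this forces $[e_2, \ll] = 0$ and $[\ll, e_2] = 0$ since $\ll^3 = 0$ and also $[\ll,\ll]\subseteq \langle e_2\rangle$ together with the Leibniz identity controls the remaining products.

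Next I would write the only possibly nonzero product as $[e_1, e_1] = \alpha e_2$ with $\alpha \in \mathbb{C}$, because every bracket lands in $\langle e_2\rangle$, while $[e_1,e_2]=[e_2,e_1]=[e_2,e_2]=0$ as noted. Since $\dim\ll^2 = 1$ we must have $\alpha \neq 0$ (otherwise all brackets vanish and $\ll$ is abelian, contradicting $\dim\ll^2=1$). I would then rescale: replacing $e_2$ by $\alpha e_2$ (equivalently, choosing $e_2 := [e_1,e_1]$ as the second basis vector, which is legitimate because it is nonzero), we obtain $[e_1,e_1] = e_2$ and all other products zero, which is precisely $\mu_1$. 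One should also check the Leibniz identity holds for $\mu_1$, but this is immediate since any triple bracket involves a product with $e_2$, which is zero.

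The only genuine subtlety — and the step I would flag as the main point to be careful about — is justifying that $[e_1, e_2]$ and $[e_2, e_1]$ both vanish. This follows because $\ll^2 = \langle e_2\rangle$ is an ideal and $\ll^3 = [\ll^2, \ll] = 0$ gives $[e_2, e_1] = 0$; for $[e_1, e_2]$ one uses that $e_2 \in \ll^2$ so $[e_1, e_2] \in [\ll, \ll^2]$, and the Leibniz identity $[x,[y,z]] = [[x,y],z] - [[x,z],y]$ expresses $[e_1, e_2] = [e_1, [e_1,e_1]\cdot \alpha^{-1}] $ type brackets in terms of products that land in $\ll^3 = 0$. In effect the whole classification reduces to the observation that a $2$-dimensional nilpotent Leibniz algebra is determined by the single scalar $[e_1,e_1]/e_2$, which is either zero or can be normalized to $1$. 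There is no real obstacle here; the argument is short and the case division ($\dim\ll^2 = 0$ versus $1$) is exhaustive and clean.
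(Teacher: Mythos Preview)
The paper does not give its own proof of this statement; it is quoted from \cite{Loday} and used as input for the rest of the classification. So there is nothing in the paper to compare your argument against.

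Your argument is correct in substance. The one spot that reads a little circular is when you write $e_2 = \alpha^{-1}[e_1,e_1]$ to show $[e_1,e_2]=0$ before having established $\alpha\neq 0$. The clean way to phrase what you already have in mind is to observe directly from the Leibniz identity that $[\ll,\ll^2]\subseteq \ll^3$: for any $x,y,z\in\ll$,
\[
[x,[y,z]] = [[x,y],z] - [[x,z],y] \in [\ll^2,\ll] = \ll^3 = 0,
\]
so $[e_1,e_2]=0$ with no reference to $\alpha$. Then $\ll^2=\langle e_2\rangle$ forces $\alpha\neq 0$, and your rescaling finishes the job. With that tweak the proof is complete and entirely standard.
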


\begin{thm} \label{lambda}\cite{AAO1}
Let $\ll$ be a $3$-dimensional nilpotent Leibniz algebra. Then $\ll$ is isomorphic to one of the following pairwise non-isomorphic algebras:
\begin{align*}
\lambda_1&: \ abelian,\\
\lambda_2&: \  [e_1,e_1]=e_3,\\
\lambda_3&: \ [e_1,e_2]=e_3, \ [e_2,e_1]=-e_3,\\
\lambda_4(\alpha)&: \ [e_1,e_1]=e_3, \ [e_2,e_2]=\alpha e_3, \ [e_1,e_2]=e_3,\\
\lambda_5&: \ [e_2,e_1]=e_3,\ [e_1,e_2]=e_3,\\
\lambda_6&: \ [e_1,e_1]=e_2, \ [e_2,e_1]=e_3.
\end{align*}
\end{thm}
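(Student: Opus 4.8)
The plan is to classify all 3-dimensional nilpotent Leibniz algebras by starting from the structure constants and systematically reducing via change of basis. First I would note that for a nilpotent Leibniz algebra $\ll$ of dimension 3, the derived subalgebra $\ll^2$ is a proper ideal, so $\dim \ll^2 \leq 2$, and moreover $\ll^2$ is contained in the right annihilator-related filtration; in fact nilpotency forces $\ll^2 \subsetneq \ll$ and $[\ll^2,\ll]\subsetneq \ll^2$. I would split the argument according to $\dim \ll^2 \in \{0,1,2\}$. The case $\dim \ll^2 = 0$ is the abelian algebra $\lambda_1$.

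For $\dim \ll^2 = 1$, write $\ll^2 = \langle e_3 \rangle$. Then all brackets $[e_i,e_j]$ are scalar multiples of $e_3$, and since $e_3 \in \ll^2$ and $\ll$ is nilpotent we get $[\ll,e_3]=[e_3,\ll]=0$, so $e_3$ is central. The bracket is then governed by a bilinear form $B$ on the 2-dimensional quotient $\ll/\langle e_3\rangle$ with values in $\langle e_3\rangle$, i.e. essentially a $2\times 2$ matrix. The Leibniz identity imposes constraints (automatically satisfied here since everything lands in the center), and the symmetric part is unconstrained while the antisymmetric part must be compatible; then classifying up to the $GL_2$ action on $\ll/\langle e_3\rangle$ together with rescaling of $e_3$ amounts to classifying $2\times2$ matrices under congruence-type equivalence. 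This yields the rank-one symmetric case $\lambda_2$, the antisymmetric case $\lambda_3$, the family $\lambda_4(\alpha)$ coming from rank-two symmetric-dominant forms (where $\alpha$ is the residual invariant), and $\lambda_5$ from the nontrivial split into a symmetric plus an independent piece. The bookkeeping to show these are exactly the orbits, and that $\lambda_4(\alpha)$ is a genuine one-parameter family with no further collapse, is the fiddly part.

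For $\dim \ll^2 = 2$, necessarily $\ll^2 = \langle e_2, e_3 \rangle$ and, by nilpotency, $\ll^3 = [\ll^2,\ll] \subsetneq \ll^2$ is one-dimensional, say $\ll^3 = \langle e_3 \rangle$, with $\ll^4 = 0$. Pick $e_1 \notin \ll^2$; then $[e_1, -]$ or $[-,e_1]$ generates $\ll^2$ modulo $\ll^3$, so after scaling we may assume $[e_1,e_1] = e_2$ (the self-bracket is forced to be nonzero modulo $\ll^3$ in this branch, one checks the alternative that $\ll^2$ is generated only antisymmetrically is incompatible with $\dim \ll^2 = 2$). Then $[e_2, e_1]$ and $[e_1, e_2]$ lie in $\ll^3 = \langle e_3\rangle$; using the Leibniz identity with $x=y=e_1$, $z=e_1$ and related substitutions pins down these coefficients, and after a final normalization one lands on $\lambda_6 : [e_1,e_1]=e_2,\ [e_2,e_1]=e_3$. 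The main obstacle throughout is organizing the change-of-basis normalizations so that no case is missed and no two normal forms coincide; in particular, verifying the pairwise non-isomorphism of $\lambda_2, \lambda_3, \lambda_4(\alpha), \lambda_5$ requires isolating invariants such as $\dim R(\ll)$, whether $\ll^2$ lies in the symmetric square, and the rank and type of the induced form, and checking that $\lambda_4(\alpha)\cong\lambda_4(\alpha')$ only for $\alpha=\alpha'$. Since this result is quoted from \cite{AAO1}, I would present the proof as a guided verification rather than reproduce every computation.
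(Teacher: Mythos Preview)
The paper does not prove this theorem at all: it is stated with a citation to \cite{AAO1} and used as input for the rest of the paper, so there is no ``paper's own proof'' to compare your proposal against. You already recognise this in your last sentence.

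As a standalone sketch your outline is essentially the standard one and is sound in broad strokes: splitting on $\dim\ll^2$, using that $\ll^3=0$ forces $\ll^2$ to be central (via $[\ll,\ll^2]\subseteq[\ll^2,\ll]$ from the Leibniz identity), and then classifying the induced bilinear form on the $2$-dimensional quotient up to $GL_2$ and scaling. The one place where your writeup is vague enough to be risky is the $\dim\ll^2=1$ case: the claim that the $GL_2\times\mathbb{C}^\times$ orbits on $2\times2$ matrices are exactly $\lambda_2,\lambda_3,\lambda_4(\alpha),\lambda_5$, with $\alpha$ a genuine invariant, is asserted rather than argued, and this is precisely where errors tend to creep in (for instance, one must check that $\lambda_5$ is not a special value of $\lambda_4(\alpha)$, and that the parameter in $\lambda_4(\alpha)$ really cannot be normalised away). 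If you intend to include a proof, that orbit analysis needs to be written out; if not, simply citing \cite{AAO1} as the paper does is the honest option.
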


\begin{rem}
In \cite{3-dim} the law of $\lambda_5$ is defined by the following products:
$$[e_1,e_1]=e_3,\quad [e_2,e_1]=e_3,\quad [e_1,e_2]=e_3.$$

It is enough to take the basis transformation
$e_1'=e_1-\frac{1}{2}e_2,$ to obtain the expression of
$\lambda_5$ in the Theorem \ref{lambda}.
\end{rem}

In order to simplify the below calculations, it is worthwhile to consider the following Lemma.
\begin{lem}
The algebra $\lambda_4(\alpha)$ is isomorphic to one of the following algebras:
$$\begin{array}{cc}
\lambda_4'(\beta):\begin{cases}
[e_2,e_1]=e_3,\\
[e_1,e_2]=\beta e_3, \hbox{ with } \beta=
\frac{\sqrt{1-4\alpha}-1}{\sqrt{1-4\alpha}+1}, \alpha \neq \frac 1
4,
\end{cases} &
\lambda_4':\begin{cases}
[e_1,e_1]=e_3,\\
[e_2,e_1]=e_3,\\
[e_1,e_2]=-e_3.
\end{cases}
\end{array}$$
\end{lem}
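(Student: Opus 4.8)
The plan is to start from the multiplication of $\lambda_4(\alpha)$, namely $[e_1,e_1]=e_3$, $[e_2,e_2]=\alpha e_3$, $[e_1,e_2]=e_3$, and look for a change of basis that eliminates one of the "diagonal" products $[e_i,e_i]$. Since $e_3$ spans the one-dimensional square of the algebra and must be preserved up to scalar, I would keep $e_3$ (rescaled if needed) and work with a general invertible transformation of the two-dimensional complement spanned by $e_1,e_2$. Writing $e_1'=a e_1+b e_2$, $e_2'=c e_1+d e_2$ with $ad-bc\neq 0$, I would compute the four products $[e_i',e_j']$ in terms of $e_3$, obtaining quadratic forms in $(a,b)$ and $(c,d)$ with coefficients depending on $\alpha$. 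The symmetric part of the bracket on the complement is governed by the quadratic form $q(x,y)=x^2+xy+\alpha y^2$ (from $[e_1,e_1]=e_3$, $[e_1,e_2]+[e_2,e_1]=e_3$, $[e_2,e_2]=\alpha e_3$), while the antisymmetric part is $[e_1,e_2]-[e_2,e_1]=e_3$.

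The key observation is that the discriminant of $q$ is $1-4\alpha$, so the behaviour splits at $\alpha=\tfrac14$. For $\alpha\neq\tfrac14$ the form $q$ is nondegenerate and factors into two distinct linear factors over $\mathbb{C}$; I would choose $e_1'$ and $e_2'$ along the two isotropic directions of $q$, i.e. the two lines on which $q$ vanishes, so that $[e_1',e_1']=[e_2',e_2']=0$ and only the cross products survive. Rescaling $e_1'$ (and absorbing the needed factor into $e_3'$) normalizes $[e_2',e_1']=e_3$; the remaining product $[e_1',e_2']=\beta e_3$ then has $\beta$ equal to the ratio of the two "eigenvalues" of the bracket along these isotropic lines, which a direct computation identifies as $\beta=\frac{\sqrt{1-4\alpha}-1}{\sqrt{1-4\alpha}+1}$. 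This yields the family $\lambda_4'(\beta)$. For $\alpha=\tfrac14$ the form $q$ is a perfect square, $q(x,y)=(x+\tfrac12 y)^2$, so it has a single (repeated) isotropic line; I can send one basis vector along that line to kill one diagonal product but cannot kill both, and after rescaling the surviving relations become $[e_1',e_1']=e_3$, $[e_2',e_1']=e_3$, $[e_1',e_2']=-e_3$, which is exactly $\lambda_4'$.

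The main obstacle I expect is organizing the case $\alpha\neq\tfrac14$ cleanly: one must verify that the two isotropic directions are genuinely independent (which is where nondegeneracy of $q$ is used) and then carry out the rescaling bookkeeping so that the coefficient $\beta$ comes out in the stated closed form rather than some equivalent-but-uglier expression, checking in particular that the square root is consistent with whichever branch is chosen (the two branches correspond to swapping $e_1'\leftrightarrow e_2'$, which replaces $\beta$ by $1/\beta$, and one should remark that $\lambda_4'(\beta)\cong\lambda_4'(1/\beta)$ so the choice is immaterial). The degenerate case $\alpha=\tfrac14$ is routine by comparison once the right basis vector is identified. I would also note for completeness that $\alpha=0$ is not special here, since then $q(x,y)=x(x+y)$ already has two distinct isotropic lines $x=0$ and $x+y=0$, so it falls under the generic case with $\beta=0$ (consistent with $\frac{\sqrt{1}-1}{\sqrt{1}+1}=0$).
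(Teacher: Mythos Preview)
Your argument is correct and takes a genuinely different route from the paper's. The paper proceeds by a sequence of explicit, ad hoc basis changes: it first splits off $\alpha=0$ by hand, then for $\alpha\neq 0$ sets $e_2'=e_2+\beta e_1$ with $\beta=-\tfrac{1+\sqrt{1-4\alpha}}{2}$ to kill $[e_2',e_2']$, rescales, and finally separates $\beta=-\tfrac12$ (i.e.\ $\alpha=\tfrac14$) from $\beta\neq-\tfrac12$ with one more substitution $e_1'=e_1-\tfrac{\beta}{2\beta+1}e_2$. Your approach instead interprets the symmetric part of the bracket on $\langle e_1,e_2\rangle$ as the quadratic form $q(x,y)=x^2+xy+\alpha y^2$ and diagonalises by passing to its isotropic lines; the discriminant $1-4\alpha$ then explains intrinsically why $\alpha=\tfrac14$ is the exceptional value, and your observation that $\alpha=0$ needs no separate treatment is a genuine simplification over the paper. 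The trade-off is that the paper's argument is entirely elementary and self-contained, requiring no appeal to quadratic-form language, whereas yours is shorter and more conceptual but asks the reader to accept (or verify) the isotropic-line computation; in particular you should, when writing it out, display the two isotropic vectors explicitly and compute $[e_1',e_2']$ and $[e_2',e_1']$ to extract $\beta$ in closed form, since that step is where the stated formula $\beta=\frac{\sqrt{1-4\alpha}-1}{\sqrt{1-4\alpha}+1}$ actually emerges.
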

\begin{proof}
Let us distinguish two cases:
\begin{itemize}
\item{If $\alpha=0,$} just taking the basis transformation $e_2'=e_1-e_2,$ $e_1'=e_2,$ the algebra $\lambda_4(0)$ is obtained.
\item{If $\alpha \neq 0,$} taking the basis transformation $e_2'=e_2+\beta e_1$ with $\beta=- \frac{1+\sqrt{1-4\alpha}}{2},$
we obtain the following family of algebras:
    $$\begin{cases}
    [e_1,e_1]=e_3,\\
    [e_2,e_1]=\beta e_3,\\
    [e_1,e_2]=(1+\beta)e_3.
    \end{cases}$$
    It is clear that $\beta \neq 0.$ Therefore we can take the basis transformation
     $e_2'=\frac{1}{\beta}e_2,$ and the law of $\lambda_4(\beta)$ can be written as follows:
    $$\begin{cases}
    [e_1,e_1]=e_3,\\
    [e_2,e_1]=e_3,\\
    [e_1,e_2]=(1+\frac{1}{\beta})e_3.
    \end{cases}$$
    Finally, it is useful to consider the following cases:
    \begin{itemize}
    \item{Case 1: If $\beta \neq -\frac{1}{2},$} that is, if $\alpha \neq \frac{1}{4},$ by taking the basis transformation
     $e_1'=e_1-\frac{\beta}{2\beta+1}e_2$ and $\beta'=\frac{1+\beta}{\beta},$ the algebra $\lambda_4'(\beta)$ is obtained, for
    $\beta=
\frac{\sqrt{1-4\alpha}-1}{\sqrt{1-4\alpha}+1}, \alpha \notin \{0,\frac 1 4\}.$
    \item{Case 2: If $\beta=-\frac{1}{2},$} the algebra $\lambda_4'$ is obtained.
    \end{itemize}
\end{itemize}
\end{proof}

\begin{prop}\label{derivations} There exist a basis such that the derivations of the algebras $\lambda_1,$
 $\lambda_2,$ $\lambda_3,$ $\lambda_4',$ $\lambda_4(\beta),$
 $\lambda_5$ and $\lambda_6$ have the following forms:
 $$\begin{array}{lll}
 Der(\lambda_1)=\left(\begin{array}{ccc}
 a_1 & a_2 & a_3 \\
 b_1 & b_2 & b_3 \\
 c_1 & c_2 & c_3 \\
 \end{array} \right),&
 Der(\lambda_2)=\left(\begin{array}{ccc}
  a_1 & a_2 & a_3 \\
 0 & b_2 & b_3 \\
 0 & 0 & 2a_1 \\
 \end{array}\right), &
 Der(\lambda_3)=\left(\begin{array}{ccc}
  a_1 & a_2 & a_3 \\
 b_1 & b_2 & b_3 \\
 0 & 0 & a_1+b_2 \\
 \end{array}\right),\\[7mm]
  Der(\lambda_4')=\left(\begin{array}{ccc}
 a_1 & a_2 & a_3 \\
 0 & 2a_1 & b_3 \\
 0 & 0 & 3a_1 \\
 \end{array}\right), &
 Der(\lambda_4'(\beta))=\left(\begin{array}{ccc}
 a_1 & 0 & a_3 \\
 0 & b_2 & b_3 \\
 0 & 0 & a_1+b_2 \\
 \end{array}\right), &
 Der(\lambda_5)=\left(\begin{array}{ccc}
a_1 & 0 & a_3 \\
 0 & b_2 & b_3 \\
 0 & 0 & a_1+b_2 \\
 \end{array} \right),\\[7mm]
 Der(\lambda_6)=\left(\begin{array}{ccc}
a_1 & 0 & a_3 \\
 0 & 2a_1 & a_2 \\
 0 & 0 & 3a_1 \\
 \end{array} \right).
 \end{array}$$
 \end{prop}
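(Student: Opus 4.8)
The plan is to compute $\mathrm{Der}(\ll)$ directly for each of the seven nilpotent algebras $\lambda_1,\dots,\lambda_6$ and $\lambda_4'$ by writing an arbitrary linear map in the given basis and imposing the derivation identity $d([e_i,e_j])=[d(e_i),e_j]+[e_i,d(e_j)]$ on the (few) nonzero structure constants. Concretely, write $d(e_i)=\sum_k d_{ki}e_k$, so that $d$ is the matrix $(d_{ki})$; since each of these three-dimensional algebras has its products landing in a single coordinate ($e_3$, or $e_2$ and $e_3$ for $\lambda_6$), the derivation condition reduces to a small linear system in the entries $d_{ki}$, and the stated matrices are simply the general solution. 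For $\lambda_1$ (abelian) there is no constraint, so $\mathrm{Der}(\lambda_1)=\mathfrak{gl}_3$, matching the first displayed matrix.

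First I would dispose of the algebras for which a basis change is needed to realize the claimed normal form. For $\lambda_4'$ and $\lambda_4'(\beta)$ this amounts to invoking the Lemma proved just above: once the representative is in the form $\lambda_4'$ (resp.\ $\lambda_4'(\beta)$), the derivation computation is done in that basis, so the phrase ``there exists a basis'' in the statement is exactly the basis supplied by that Lemma. (One should note the typo $\lambda_4(\beta)$ vs.\ $\lambda_4'(\beta)$ in the proposition's matrix list; the intended object is $\lambda_4'(\beta)$, whose derivation algebra is displayed.) For $\lambda_5$ I would likewise use the basis from the Remark, i.e.\ the presentation $[e_2,e_1]=e_3=[e_1,e_2]$; then a direct check gives the upper-triangular form shown, identical in shape to $\mathrm{Der}(\lambda_4'(\beta))$, which is a useful sanity cross-check.

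For each remaining algebra the computation is mechanical but I would organize it the same way. Take $\lambda_2$: the only relation is $[e_1,e_1]=e_3$. Applying $d$ gives $d_{13}e_1+d_{23}e_2+d_{33}e_3 = 2[d(e_1),e_1] = 2d_{11}e_3$, forcing $d_{13}=d_{23}=0$ and $d_{33}=2d_{11}$; and $[e_1,e_2]=[e_1,e_3]=[e_2,\cdot]=\dots=0$ must be preserved, which kills $d_{31}$ and $d_{32}$ but leaves $b_2=d_{22},\,b_3=d_{23}$... — one reads off precisely the stated matrix. The cases $\lambda_3$ (where $[e_1,e_2]=-[e_2,e_1]=e_3$ yields $d_{33}=d_{11}+d_{22}$ plus the vanishing of the third row off the diagonal), $\lambda_4'$ (three relations, giving $d_{22}=2d_{11}$, $d_{33}=3d_{11}$, upper-triangular), $\lambda_4'(\beta)$ (the extra relation $[e_1,e_2]=\beta[e_2,e_1]$ with $\beta\ne 1$ forces $d_{12}=0$ and $d_{33}=d_{11}+d_{22}$), and $\lambda_6$ (the ``filiform'' case $[e_1,e_1]=e_2,\,[e_2,e_1]=e_3$, which ties $d_{22}=2d_{11}$, $d_{33}=3d_{11}$, $d_{32}=d_{21}=0$ and links $d_{32}$-type entries so that the free parameter $a_2$ reappears in position $(2,3)$) all go the same way.

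The only genuine subtlety — the ``hard part'' — is bookkeeping: making sure that \emph{all} products, including the ones that are zero (e.g.\ $[e_2,e_2]$, $[e_1,e_3]$, $[e_3,e_j]$), are used, since preserving a zero product is a real linear constraint and is exactly what forces, say, the bottom-left $2\times 2$ block to vanish in $\lambda_2,\lambda_3,\lambda_4',\lambda_6$. A clean way to keep this honest is to first observe that $e_3\in R(\ll)$ (and, in $\lambda_6$, that the image of every product lies in $\langle e_2,e_3\rangle$, the center-ish ideal), so any derivation must send the derived ideal into itself and $R(\ll)$ into itself; this a priori pins down which matrix entries can be nonzero before one ever writes an equation. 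After that structural observation, each case is a two- or three-equation linear solve, and collecting the solutions gives exactly the seven matrices displayed, completing the proof.
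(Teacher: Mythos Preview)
Your approach is correct and is exactly the paper's own: the authors' proof is the single sentence ``The proof is carried out by checking the derivation property on algebras $\lambda_i$,'' and your proposal simply unpacks that computation. One small caveat: the displayed matrices in the paper use the \emph{row} convention $d(e_i)=\sum_j m_{ij}e_j$, not the column convention $d(e_i)=\sum_k d_{ki}e_k$ you set up, so your sample $\lambda_2$ bookkeeping (``kills $d_{31}$ and $d_{32}$'') is transposed relative to what is printed; with the row convention the vanishing entries are $(2,1),(3,1),(3,2)$ and the free ones $a_2,a_3,b_3$ sit in the upper triangle, matching the statement.
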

 \begin{proof}
The proof is carried out by checking the derivation property on algebras $\lambda_i.$
 \end{proof}

\subsection{$4$-dimensional solvable Leibniz algebras with $2$-dimensional nilradical}
 In this subsection we focus our attention in the algebras whose nilradical has dimension two. The following theorem shows the classification of these algebras.
\begin{thm}
Let $\ll$ be a $4$-dimensional solvable Leibniz algebra with $2$-dimensional nilradical.
Then, $\ll$ is isomorphic to one of the following pairwise non-isomorphic algebras:
$$\begin{array}{ccc}
\mathcal{R}_1:\begin{cases}
[e_1,x]=e_1,\\
[e_2,y]=e_2,\\ \end{cases}& \mathcal{R}_2:\begin{cases}
[e_1,x]=e_1,\\
[e_2,y]=e_2,\\
[x,e_1]=-e_1,\\
[y,e_2]=-e_2,\\ \end{cases}& \mathcal{R}_3:\begin{cases}
[e_1,x]=e_1,\\
[e_2,y]=e_2,\\
[y,e_2]=-e_2.
\end{cases}
\end{array}$$
\end{thm}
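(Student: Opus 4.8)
The plan is to set up the structure theory exactly as in the preceding material: write $\ll = N \oplus Q$ with $N$ the nilradical and $\dim N = 2$, $\dim Q = 2$. By Theorem \ref{2dim-nil} the nilradical $N$ is either abelian (the algebra $\mu_0$, say, with basis $\{e_1,e_2\}$) or isomorphic to $\mu_1$ with $[e_1,e_1]=e_2$. I would first argue that the case $N \cong \mu_1$ cannot occur: by Proposition \ref{derivations} applied to the $2$-dimensional analogue (or by a direct check), the derivation algebra of $\mu_1$ has the form $\begin{pmatrix} a & 0 \\ b & 2a \end{pmatrix}$ — in particular any two derivations are nil-dependent, since the only non-nilpotent direction is spanned by $a$, so the maximal number of nil-independent derivations is $1 < 2 = \dim Q$, contradicting Theorem \ref{nilr}. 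Hence $N$ must be abelian.

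So $N$ is the $2$-dimensional abelian algebra with basis $\{e_1,e_2\}$, and $Q = \langle x,y\rangle$ with $Q^2 \subseteq N$ and $[N,Q], [Q,N] \subseteq N$ (since $\ll^2 \subseteq N$). The key tool is that $R_x$ and $R_y$ restricted to $N$ are nil-independent derivations of $N$; since $\mathrm{Der}(N) = \mathfrak{gl}_2(\mathbb{C})$, the operators $R_x|_N, R_y|_N$ are two nil-independent matrices, so (after a change of basis in $Q$) we may assume they are two commuting non-nilpotent diagonalizable matrices spanning a Cartan-like plane. Up to a basis change in $N = \mathbb{C}^2$ and $Q$, simultaneous reduction lets me take $R_x|_N = \mathrm{diag}(1,0)$ and $R_y|_N = \mathrm{diag}(0,1)$ (the generic semisimple case), after absorbing nilpotent parts and checking these give rise to the non-nilpotent part of the action; degenerate Jordan-block possibilities get eliminated by the nil-independence requirement. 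This fixes $[e_1,x]=e_1$, $[e_2,y]=e_2$, and all other products of $N$ with $x,y$ on the right to be zero.

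Next I would pin down the remaining products: $[Q,N]$ (left multiplication by $x,y$ on $N$), $[Q,Q]$, and then impose the Leibniz identity together with the requirement that $N$ is exactly the nilradical (maximality). Writing $[x,e_1] = \alpha_1 e_1 + \alpha_2 e_2$, etc., and $[x,x],[x,y],[y,x],[y,y] \in N$, I expand the Leibniz identity on all triples drawn from $\{e_1,e_2,x,y\}$. The identities involving $R_x, R_y$ being derivations, plus $[x,[x,y]]=[[x,x],y]-[[x,y],x]$ and its relatives, force most coefficients to vanish; in particular the squares $[x,x],[y,y]$ and the mixed brackets $[x,y]+[y,x] \in R(\ll)$ get constrained, and one shows $[Q,Q]=0$ after a further change of basis in $Q$ (shifting $x,y$ by elements of $N$, which is allowed since $N \subseteq$ the relevant eigenspaces). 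The left-action terms $[x,e_i],[y,e_i]$ are then seen to be either $0$ or forced to be $-e_1$ (for $x$ on $e_1$) / $-e_2$ (for $y$ on $e_2$), because $[e_i,[x,e_i]]$-type identities and the structure of $R(\ll)$ rule out other components. This yields finitely many candidate families depending on whether $[x,e_1]=0$ or $-e_1$ and whether $[y,e_2]=0$ or $-e_2$ — four combinations, of which $\{0,0\}$ gives $\mathcal{R}_1$, $\{-e_1,-e_2\}$ gives $\mathcal{R}_2$, and the two mixed cases give algebras isomorphic to each other (swap $e_1 \leftrightarrow e_2$, $x \leftrightarrow y$), namely $\mathcal{R}_3$.

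Finally I would verify that $\mathcal{R}_1, \mathcal{R}_2, \mathcal{R}_3$ are genuinely Leibniz algebras (routine check of the identity), that in each the nilradical is exactly the $2$-dimensional $\langle e_1,e_2\rangle$ (so they legitimately belong to this case and are not secretly $3$-nilradical algebras), and that they are pairwise non-isomorphic — the cleanest invariants being $\dim R(\ll)$ and the dimension of the image of the left-multiplication operators, or equivalently comparing $\dim \ll^2$ and whether the left and right actions of $Q$ coincide up to sign. The main obstacle I anticipate is the bookkeeping in the third paragraph: organizing the change-of-basis freedom in $Q$ (translations by $N$, and $GL_2$ on the $\{x,y\}$ side that preserves the diagonal form of $R_x|_N, R_y|_N$) so that one can simultaneously normalize $[Q,Q]$ to zero and the left-action coefficients to their canonical values, without circular reasoning — i.e., making sure each normalization step genuinely uses up freedom that the next step doesn't need back.
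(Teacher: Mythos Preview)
Your outline matches the paper's proof in structure and in every essential step: rule out $\mu_1$ by counting nil-independent derivations, normalize $R_x|_N,R_y|_N$ to $\mathrm{diag}(1,0)$ and $\mathrm{diag}(0,1)$, then use Leibniz identities and translations of $x,y$ by elements of $N$ to kill $[Q,Q]$ and reduce $[x,e_1],[y,e_2]$ to $0$ or $-e_1,-e_2$, with the two mixed cases identified by the swap $(e_1,x)\leftrightarrow(e_2,y)$.

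One point needs correction. You write that $R_x|_N$ and $R_y|_N$ ``are two nil-independent matrices, so \ldots\ we may assume they are two commuting non-nilpotent diagonalizable matrices.'' Nil-independence alone does not force commutativity (two generic nil-independent elements of $\mathfrak{gl}_2$ need not commute), and no change of basis in $Q$ will manufacture it. What actually gives $[R_x|_N,R_y|_N]=0$ is the Leibniz identity: for $n\in N$ one has $[n,[x,y]]=[[n,x],y]-[[n,y],x]$, and the left side vanishes because $[x,y]\in\ll^2\subseteq N$ and $N$ is abelian. Once commutativity is in hand, your simultaneous-diagonalization argument goes through (and the Jordan-block case is then excluded exactly as you say, since anything commuting with a $2\times2$ Jordan block is upper triangular with a repeated eigenvalue, hence nil-dependent with it). The paper never states this commutator identity explicitly; instead it first puts $R_x$ alone into Jordan form and then uses the identities $[e_i,[x,y]]=0$ coordinate-by-coordinate to constrain $R_y$, arriving at the same normal form by a longer case analysis. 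Your route is cleaner, but it stands or falls on supplying that one line of justification.
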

\begin{proof}
First of all, note that the classification of the $2$-dimensional
nilpotent Leibniz algebras is showed in Theorem \ref{2dim-nil}.
Let $\ll$ be a $4$-dimensional solvable Leibniz algebra with
$2$-dimensional nilradical $\mu$. Since the number of
nil-independent derivations of $\mu_1$ equals 1, then we conclude
that $\mu$ must to be an abelian algebra. Therefore, by
considering the basis $\{x,y,e_1,e_2\},$ we have the following
information about the structural constants of $\ll$:
$$\begin{cases} [e_1,x]=a_1e_1+a_2e_2,&
[e_2,x]=a_3e_1+a_4e_2,\\
[e_1,y]=b_1e_1+b_2e_2,&
[e_2,y]=b_3e_1+b_4e_2.
\end{cases}$$
As $R_x$ and $R_y$ are nil-independent derivations of $\mu,$ then we can consider $a_1 \neq 0.$ Let us assume $a_1 =1,$ without loss of generality, and let us take the basis transformation
$y'=y-b_1x.$ Therefore we conclude that $b_1=0.$

It is easy to prove that the matrix of $R_x$
has one of the following forms:
$$\begin{array}{ccc}
\left( \begin{array}{cc}
1 & 0 \\
0 & a_4
\end{array} \right) &
\hbox{ or } &
\left( \begin{array}{cc}
1 & 1 \\
0 & 1
\end{array} \right)
\end{array}.$$
Therefore, we consider the following cases:

 \textit{\bf{Case 1: If $\left( \begin{array}{cc}
1 & a_2 \\
a_3 & a_4
\end{array} \right) \simeq
\left( \begin{array}{cc}
1 & 0 \\
0& a_4
\end{array} \right),$}} then we can write:
$$\begin{cases}
[e_1,x]=e_1,& [e_2,x]=a_4e_2,\\
[e_1,y]=b_2e_2,& [e_2,y]=b_3e_1+b_4e_2.
\end{cases}$$

 By the Leibniz identities $[e_1,[x,y]]$ and $[e_2,[x,y]],$ we obtain the restrictions $b_2(1-a_4)=0$ and $b_3(1-a_4)=0.$
 Thus, it is useful to consider the following subcases:

 \textit{\bf{Case 1.1: If $a_4=1,$}} let us discuss about the structural constants $b_i,$ for $2 \leq i \leq 4.$
 It is clear that $R_y=\left( \begin{array}{cc}
0 & b_2 \\
b_3 & b_4
\end{array} \right)$ is isomorphic to one of the following matrices:
$$\begin{array}{ccc}
\left( \begin{array}{cc}
1 & 0 \\
0 & b_4
\end{array} \right) &
\hbox{ or } &
\left( \begin{array}{cc}
1 & 1 \\
0 & 1
\end{array} \right)
\end{array}.$$

It is easy to prove that
$R_y \simeq \left( \begin{array}{cc}
1 & 0 \\
0 & b_4
\end{array} \right),$ with $b_4 \neq 1.$ Otherwise $R_x-R_y$ would be nilpotent, which is impossible.

By taking the basis transformation $x'=\frac{b_4}{b_4-1}x-\frac{1}{b_4-1}y$ and $y'=\frac{1}{b_4-1}y-\frac{1}{b_4-1}x,$ we obtain the multiplication
$$[e_1,x]=e_1,\quad [e_2,x]=0, \quad [e_1,y]=0,\quad [e_2,y]=e_2.$$

 \textit{\bf{Case 1.2: If $a_4 \neq 1,$}} then we have $b_2=b_3=0.$
As $R_x$ and $R_y$  are nil-independent derivations of $\mu,$  we obtain $b_4\neq 0.$
By taking the change of basis $x' =x-\frac {a_4}{b_4}y, \    y' = \frac 1{b_4}y,$ we have again
$$
[e_1,x]=e_1,\quad
[e_2,x]=0,\quad
[e_1,y]=0,\quad
[e_2,y]=e_2.
$$

 \textit{\bf{Case 2: If $R_x=
\left( \begin{array}{cc}
1 & 1 \\
0& 1
\end{array} \right),$}} by the Leibniz identity $[e_1,[x,y]],$ we have $b_3=b_4=0,$ giving rise to a contradiction with the assumption of $R_y$ is non-nilpotent.

Therefore, we have proved that:
$$
[e_1,x]=e_1,\quad
[e_2,x]=0,\quad
[e_1,y]=0,\quad
[e_2,y]=e_2.
$$

Let us study the remaining products to determinate the law of the algebra $\ll.$ Let us denote
$$\begin{cases}
[x,e_1]=\alpha_1e_1+\alpha_2e_2,&
[x,e_2]=\alpha_3e_1+\alpha_4e_2\\
[y,e_1]=\beta_1e_1+\beta_2e_2,&
[y,e_2]=\beta_3e_1+\beta_4e_2,\\
[x,x]=c_1e_1+c_2e_2,&
[x,y]=c_3e_1+c_4e_2,\\
[y,x]=d_1e_1+d_2e_2,& [y,y]=d_3e_1+d_4e_2.
\end{cases}$$
By taking the basis transformation $x'=x-c_1e_1-c_4e_2,$  $y'=y-d_1e_1-d_4e_2,$ we have $$c_1=c_4=d_1=d_4=0.$$

By using  again the Leibniz identity we get $$\alpha_2=\alpha_3=\alpha_4=\beta_1=\beta_2=\beta_3=c_2=c_3=d_2=d_3=0,$$
$$\alpha_1^2+\alpha_1=0, \quad \beta_3^2+\beta_3=0.$$

Let us distinguish the following cases:

 \textit{\bf{Case 2.1: If  $\alpha_1=\beta_3=0,$}} the algebra $\mathcal{R}_1$ is obtained.

  \textit{\bf{Case 2.2: If $\alpha_1=\beta_3=-1,$}} the algebra $\mathcal{R}_2$ is obtained.

   \textit{\bf{Case 2.3: If $(\alpha_1,\beta_3)=(0,-1)$ or $(\alpha_1,\beta_3)=(-1,0),$}} the algebra $\mathcal{R}_3$ is obtained.
\end{proof}

 \subsection{$4$-dimensional solvable Leibniz algebras with $3$-dimensional nilradical}

 Note that, as the algebras $\lambda_i,$ for $i \in \{1,2,3,5,6\},$ $\lambda_4'$ and the family $\lambda_4(\beta)$ are $3$-dimensional and the goal of this work is to obtain the classification of the $4$-dimensional solvable Leibniz algebras, we are only interested in algebras with one nil-independent derivation of its nilradical in this subsection.

Let $\ll$ be a $4$-dimensional solvable Leibniz algebra, with
$3$-dimensional nilradical $N,$ then there exists a basis $\{x,
e_1, e_2,e_3\}$ of $\ll$ such that the right multiplication operator $R_x$ is a non-nilpotent derivation on $N=\{e_1,
e_2,e_3\}.$

 \begin{prop} \label{propo1}
 Let $\ll$ be a $4$-dimensional solvable Leibniz algebra, whose nilradical is isomorphic to $\lambda_3.$
Then, $\ll$ is isomorphic to one of the following pairwise non-isomorphic algebras:
$$\small \begin{array}{ccc}
 \ll_1(\gamma):\begin{cases}
 [e_1,e_2]=e_3,\\
 [e_2,e_1]=-e_3, \\
 [e_1,x]=e_1, \\
 [e_2,x]=\gamma e_2,\\
 [e_3,x]=(1+\gamma)e_3,\\
 [x,e_1]=-e_1,\\
 [x,e_2]=-\gamma e_2,\\
 [x,e_3]=-(1+\gamma)e_3.
 \end{cases} &
\ll_2:\begin{cases}
 [e_1,e_2]=e_3,\\
 [e_2,e_1]=-e_3, \\
 [e_1,x]=e_1, \\
 [e_2,x]=-e_2,\\
 [x,e_1]=-e_1,\\
 [x,e_2]=e_2,\\
 [x,x]=e_3.
 \end{cases}
&
 \ll_3:\begin{cases}
 [e_1,e_2]=e_3,\\
 [e_2,e_1]=-e_3, \\
 [e_1,x]=e_1+e_2, \\
 [e_2,x]=e_2,\\
 [e_3,x]=2e_3,\\
 [x,e_1]=-e_1-e_2,\\
 [x,e_2]=-e_2,\\
 [x,e_3]=-2e_3.
 \end{cases}
 \end{array}$$
 \end{prop}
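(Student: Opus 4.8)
The plan is to exploit that the nilradical $N\cong\lambda_3$ has a very small derivation algebra (Proposition~\ref{derivations}) and a one-dimensional right annihilator, $R(\lambda_3)=\langle e_3\rangle$; once the non-nilpotent derivation $R_x$ is put into normal form, almost every remaining structure constant is forced by the Leibniz identity.

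First I would normalize $R_x$. Its restriction to $N$ is a non-nilpotent derivation, hence it has the matrix displayed for $Der(\lambda_3)$ in Proposition~\ref{derivations}. The automorphism group of $\lambda_3$ acts on $\langle e_1,e_2\rangle$ as $GL_2$ (scaling $e_3$ by the determinant) and contains the shifts $e_i\mapsto e_i+s\,e_3$; moreover $x$ may be rescaled. Conjugating the $2\times2$ block of $R_x$ by $GL_2$ puts it in Jordan form, and this block cannot be nilpotent (otherwise its trace, which is the $e_3$-eigenvalue $a_1+b_2$, vanishes and all of $R_x|_N$ becomes nilpotent). This leaves two cases: a diagonal block $\mathrm{diag}(1,\gamma)$, with $e_3$-eigenvalue $1+\gamma$, and a Jordan block with eigenvalue $1$, whence $e_3$-eigenvalue $2$. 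The shifts $e_i\mapsto e_i+s\,e_3$ then clear the remaining third-column entries, except in the resonant cases where an eigenvalue of the $2\times2$ block equals the $e_3$-eigenvalue; there one instead absorbs the leftover term by a shift $x\mapsto x+v$, $v\in N$.

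Next I would record the remaining products. Since $\ll^2\subseteq N$, all of $[e_i,x]$, $[x,e_i]$ and $[x,x]$ lie in $N$; a direct check shows $R(\ll)\subseteq\langle e_3\rangle$, with $e_3\in R(\ll)$ exactly when $[x,e_3]=0$. Because $[x,x]\in R(\ll)$, this already forces $[x,x]=0$ unless $[x,e_3]=0$, in which case $[x,x]=\mu e_3$. From $[x,e_i]+[e_i,x]\in R(\ll)$ we get $[x,e_i]=-[e_i,x]+\nu_i e_3$. Feeding the triples $(x,e_1,e_2)$, $(x,e_i,x)$ and $(e_i,e_j,x)$ into the Leibniz identity then pins $\nu_1=\nu_2=\nu_3=0$ in the generic situation, producing the law $\ll_1(\gamma)$ in the diagonal case and $\ll_3$ in the Jordan case. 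The genuine exception is $\gamma=-1$: there $1+\gamma=0$, so $e_3\in R(\ll)$, the term $[x,x]=\mu e_3$ survives the Leibniz identities, and rescaling $e_1$ (hence $e_3$) normalizes $\mu$ to $1$ when $\mu\neq0$, giving $\ll_2$; when $\mu=0$ one gets $\ll_1(-1)$.

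Finally I would verify that $\ll_1(\gamma)$, $\ll_2$ and $\ll_3$ are pairwise non-isomorphic (for the family, modulo the symmetry $\gamma\leftrightarrow\gamma^{-1}$ coming from the automorphism $e_1\leftrightarrow e_2$ of $\lambda_3$). Here one appeals to isomorphism invariants: $\ll_2$ is the only non-Lie algebra in the list (it has $[x,x]\neq0$), which in particular separates it from $\ll_1(-1)$; $\ll_3$ is distinguished from the $\ll_1(\gamma)$ with $\gamma\neq-1$ by the Jordan type of the $R_x$-action on $N$ (the action has a nontrivial Jordan block, and this persists after modifying $R_x$ by the nilpotent inner derivations $R_{e_1},R_{e_2}$); and within the family the unordered triple of eigenvalues $\{1,\gamma,1+\gamma\}$ of $R_x$, taken up to a common nonzero scalar, distinguishes the remaining parameters. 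I expect the main obstacle to be the bookkeeping in the resonant sub-cases ($\gamma\in\{0,-1\}$ and the analogous coincidence in the Jordan case): there the first normalization of $R_x$ leaves extra parameters, and one must follow how several further changes of basis — in particular shifts of $x$ by elements of $N$ — act simultaneously on the left products $[x,e_i]$ and on $[x,x]$, in order to see that these parameters collapse and that exactly $\ll_2$ (and no further family) survives.
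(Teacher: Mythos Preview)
Your approach is essentially the paper's: normalize the non-nilpotent derivation $R_x|_N$ via the $GL_2$-action on $\langle e_1,e_2\rangle$ and shifts by $e_3$ (the paper does this a bit more ad hoc rather than invoking Jordan form directly), then use $[x,e_i]+[e_i,x],\,[x,x]\in R(\ll)\subseteq\langle e_3\rangle$ together with the Leibniz identity to force the remaining products, with the genuine bifurcation occurring at $\gamma=-1$ exactly as you describe. Your discussion of pairwise non-isomorphism via concrete invariants, and in particular your observation of the redundancy $\ll_1(\gamma)\cong\ll_1(\gamma^{-1})$, actually goes beyond the paper, which defers the non-isomorphism check to a computer verification.
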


 \begin{proof}
 According to Proposition \ref{derivations} and the law of $\lambda_3,$ we know the following products of the structure of the algebra $\ll$:
 \begin{align*}
 [e_1,e_2]&=e_3,\\
 [e_2,e_1]&=-e_3,\\
 [e_1,x]&=a_1e_1+a_2e_2+a_3e_3,\\
 [e_2,x]&=b_1e_1+b_2e_2+b_3e_3,\\
 [e_3,x]&=(a_1+b_2)e_3.
 \end{align*}
 Moreover, since $e_1,e_2 \notin R(\ll),$ applying the properties of the right annihilator we can write:
 \begin{align*}
 [x,e_1]&=-a_1e_1-a_2e_2+\alpha_3e_3,\\
 [x,e_2]&=-b_1e_1-b_2e_2+\beta_3e_3,\\
 [x,e_3]&=-(a_1+b_2)e_3,\\
 [x,x]&=\gamma_3e_3.
 \end{align*}
 Let us distinguish the following cases:

 \textit{\bf{Case 1.}} If $(a_1,b_2) \neq (0,0),$ we can take, without loss of generality, $a_1\neq 0.$ Note that $b_1=0.$ Otherwise, we could consider two cases: $a_2=0$ or $a_2 \neq 0.$ In the first case, by making the change of basis $e_1'=e_2,\ e_2'=e_1,\ e_3'=-e_3,$ we can assume $b_1=0.$ On the other hand, if $a_2\neq 0,$  by making the basis transformation $e_2'=e_2+\frac{-(b_2-a_1)+\sqrt{(b_2-a_1)^2+4a_2b_1}}{2a_2}e_1,$ we come to $b_1=0.$ It suffices to make the basis transformation $x'=\frac{1}{a_1}x$ to obtain $a_1=1.$

    Let us to consider the following cases:

    \textit{\bf{Case 1.1.}} If $b_2 \notin \{0,1\},$ then by means of the following basis transformation:
     $$
     e_1'=e_1-\frac{a_2}{b_2-1}e_2-\frac{a_3(b_2-1)-a_2b_3}{b_2(b_2-1)}e_3,\quad
     e_2'=e_2-b_3e_3,\\
     $$
      we obtain $a_2=a_3=b_3=0.$

     By the Leibniz identities $[x,[e_1,x]],$ $[x,[e_2,x]]$ and $[x,[x,x]]$ we conclude $$\alpha_3=0,\quad \beta_3=0, \quad \gamma_3(1+b_2)=0.$$

     Therefore we have the following possibilities:
     \begin{itemize}
     \item{If $\gamma_3=0,$} the algebra $\ll_1(b_2)$ is obtained for $b_2\notin \{0,1\}$.
     \item{If $\gamma_3 \neq 0,$} then $b_2=-1.$ It is enough to make the following basis transformation:
     $$
     e_1'=\gamma_3e_1,\quad
     e_3'=\gamma_3 e_3,\quad
     $$
     to obtain the algebra $\ll_2.$
     \end{itemize}

     \textit{\bf{Case 1.2.}} If $b_2\in \{0,1\}$, then $e_3 \notin R(L)$ and thanks to the properties
$[x,x], [e_i,x]+[x,e_i] \in R(\ll)$ for $1\leq i \leq 2,$ we conclude
$\gamma_3=0,$ $\alpha_3=-a_3$ and $\beta_3=-b_3.$
\begin{itemize}
\item If $b_2=0,$ once the following basis transformation is done:
   $$e_1'=e_1+a_2e_2,\quad
     e_2'=e_2-b_3e_3, \quad x'=x-(a_3+a_2b_3)e_2$$
     we assert $a_2=a_3=b_3=0.$
Thus we obtain the algebra $\ll_1(0).$

\item If $b_2=1,$ then making the following change of basis:
$$
e_1'=e_1-(a_3+\alpha_2b_3)e_3,\quad
e_2'=e_2-b_3e_3,\\
$$
         we come to $a_3=b_3=0.$

         Finally if $a_2=0,$ we obtain the algebra $\ll_1(1).$
         Otherwise, if $a_2 \neq 0,$ we assert $a_2=1$ by considering the basis transformation $e_2'=a_2e_2,$  $e_3'=a_2e_3.$ Therefore, the algebra $\ll_3$ is obtained.

\end{itemize}

          \textit{\bf{Case 2.}} If $(a_1,b_2)=(0,0),$ due to the non-nilpotency $R_x,$ we assert $a_2b_1 \neq 0.$
Taking the change $e'_1=e_1+e_2,$ we obtain $$[e'_1, x]=[e_1+e_2,x] = b_1(e_1+e_2)+(a_2-b_1)e_2+(a_3+b_3)e_3=b_1e_1'+a_2'e_2+a_3'e_3.$$
Since $b_1 \neq 0,$ we occur to Case 1.
      \end{proof}

  \begin{prop} \label{4}
 Let $\ll$ be a $4$-dimensional solvable Leibniz algebra, whose nilradical is isomorphic to $\lambda_4(\alpha).$
Then, $\ll$ is isomorphic to one of the following pairwise non-isomorphic algebras:
$$\small \begin{array}{ccccc}
 \ll_{4}(\gamma): &  \ll_{5}: & \ll_{6}:& \ll_{7}: &  \ll_{8}(\beta): \\
 \begin{cases}
 [e_2,e_1]=e_3, \\
 [e_1,x]=e_1, \\
 [e_2,x]=\gamma e_2, \ \gamma \in \mathbb{C},\\
 [e_3,x]=(1+\gamma)e_3,\\
 [x,e_1]=-e_1.
 \end{cases} &
\begin{cases}
 [e_2,e_1]=e_3, \\
 [e_1,x]=e_1, \\
 [e_2,x]=-e_2,\\
 [x,e_1]=-e_1,\\
 [x,x]=e_3.
 \end{cases}
  &
 \begin{cases}
 [e_2,e_1]=e_3, \\
 [e_1,x]=e_1+e_3, \\
 [e_3,x]=e_3,\\
 [x,e_1]=-e_1,\\
 [x,x]=-e_2.
 \end{cases} &
 \begin{cases}
 [e_2,e_1]=e_3, \\
 [e_2,x]=e_2, \\
 [e_3,x]=e_3.
 \end{cases} &
\begin{cases}
 [e_2,e_1]=e_3, \\
 [e_1,e_2]=\beta e_3,\\
 [e_1,x]=e_1, \\
 [e_2,x]=\beta e_2,\\
 [e_3,x]=(\beta +1)e_3,\\
 [x,e_1]=-e_1,\\
 [x,e_2]=-\beta e_2,
  \end{cases}
 \end{array}$$
 where $\beta=
\frac{\sqrt{1-4\alpha}-1}{\sqrt{1-4\alpha}+1},$ with $\alpha \notin \{0,\frac 1 4\}$
 \end{prop}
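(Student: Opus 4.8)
The plan is to mimic exactly the strategy already used in Proposition \ref{propo1}. By the Lemma we may work with the two normal forms $\lambda_4'$ and $\lambda_4'(\beta)$ of $\lambda_4(\alpha)$ rather than with $\lambda_4(\alpha)$ itself; in fact since the statement is phrased for $\lambda_4(\alpha)$ we split into the case $\alpha\notin\{0,\tfrac14\}$ (nilradical $\cong\lambda_4'(\beta)$, governed by $Der(\lambda_4'(\beta))$) and a separate short paragraph for $\alpha=\tfrac14$ (nilradical $\cong\lambda_4'$, governed by $Der(\lambda_4')$); the case $\alpha=0$ was already absorbed since $\lambda_4(0)\cong\lambda_4'(0)$. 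First I would fix a basis $\{x,e_1,e_2,e_3\}$ with $R_x$ a non-nilpotent derivation of $N$, and read off from Proposition \ref{derivations} the general shape of $R_x$ acting on $N$. For $\lambda_4'(\beta)$ this gives $[e_1,x]=a_1e_1+a_3e_3$, $[e_2,x]=b_2e_2+b_3e_3$, $[e_3,x]=(a_1+b_2)e_3$. Then, because $e_1,e_2\notin R(\ll)$ (their brackets produce $e_3$) one uses the standing facts $[u,v]+[v,u]\in R(\ll)$ and $[x,x]\in R(\ll)$ together with $R(\lambda_4'(\beta))=\langle e_3\rangle$ to force $[x,e_1]=-a_1e_1-b?\dots$, more precisely $[x,e_1]=-a_1e_1+\alpha_3e_3$, $[x,e_2]=-b_2e_2+\beta_3e_3$, $[x,e_3]=-(a_1+b_2)e_3$, $[x,x]=\gamma_3e_3$, leaving the parameters $a_1,b_2,a_3,b_3,\alpha_3,\beta_3,\gamma_3$.

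The core of the argument is a case analysis on which of $a_1,b_2$ vanish, driven by non-nilpotency of $R_x$. If $a_1\neq 0$ normalise $a_1=1$ by scaling $x$; then subcase on $b_2\notin\{-1,0\}$, $b_2=0$, $b_2=-1$ (the value $b_2=\beta$ will be the distinguished one linking to $\ll_8(\beta)$, and $b_2=-1$ is where a nonzero $\gamma_3$ can survive and produce $\ll_5$-type algebras with $[x,x]=e_3$). In each subcase one removes $a_3,b_3$ (and where possible $\alpha_3,\beta_3$) by basis changes of the form $e_1'=e_1+(\ast)e_3$, $e_2'=e_2+(\ast)e_3$, $x'=x+(\ast)e_i$, exactly as in Case 1.1/1.2 of Proposition \ref{propo1}, and then the Leibniz identities $[x,[e_1,x]]$, $[x,[e_2,x]]$, $[x,[x,x]]$ give the algebraic constraints (typically $\alpha_3=\beta_3=0$ and $\gamma_3(1+b_2)=0$ or analogous) that pin down the remaining freedom. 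The symmetric situation $b_2\neq0$, $a_1=0$ is reduced to the previous one by swapping roles (or, for $\lambda_4'(\beta)$ with $\beta\neq0$, directly by the defining relations), and the degenerate case $a_1=b_2=0$ is excluded or reduced to a nontrivial case by a change like $e_1'=e_1+e_2$ as in Case 2 of Proposition \ref{propo1}, since then $R_x$ on the subalgebra would be nilpotent. The algebras with no surviving action on $e_1$ but nontrivial action on $e_2$ (or a $[e_1,x]=e_1+e_3$ term that cannot be killed) are precisely what yield $\ll_6$ and $\ll_7$. For $\lambda_4'$ (the $\alpha=\tfrac14$ case) the derivation matrix is more rigid — the eigenvalues are forced to be $a_1,2a_1,3a_1$ — so after normalising $a_1=1$ very little case analysis remains and one lands in the appropriate member of the list.

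After the existence part, I would verify pairwise non-isomorphism. The families are separated by discrete invariants: the dimension of $R(\ll)$, whether $[x,x]=0$, the rank/Jordan type of the operator $\mathrm{ad}$ or $R_x$ restricted to $N$, the derived series, and the spectrum of $R_x$ on $N$ (which distinguishes the one-parameter family $\ll_4(\gamma)$ internally and separates $\ll_8(\beta)$ from the rest). Within $\ll_4(\gamma)$ one checks which values of $\gamma$ give isomorphic algebras by writing the general basis change preserving the structure and solving; I expect $\ll_4(\gamma)\cong\ll_4(\gamma')$ only for $\gamma=\gamma'$ (possibly with an exceptional coincidence such as $\gamma\leftrightarrow 1/\gamma$ that must be checked, analogous to the $\beta$-parameter issues in the Lemma), and similarly for $\ll_8(\beta)$ the parameter is an honest invariant because it is the ratio of the two nonzero eigenvalues of $R_x$ on $N$.

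The main obstacle I anticipate is bookkeeping rather than conceptual: keeping the long chain of admissible basis transformations consistent so that each one does not reintroduce coefficients killed by a previous one, and making sure every branch of the $(a_1,b_2)$ case tree actually terminates in one of the eight listed normal forms with no omitted or spurious case — in particular correctly handling the borderline values $b_2\in\{0,-1,\beta\}$ where the Leibniz constraints degenerate and extra terms like $[x,x]=e_3$ or $[x,x]=-e_2$ can appear. The non-isomorphism verification for the two parametric families is the only place where a genuinely non-routine computation (solving the isomorphism equations) is needed.
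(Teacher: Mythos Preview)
Your overall strategy coincides with the paper's: pass to the normal forms $\lambda_4'(\beta)$ and $\lambda_4'$, read the shape of $R_x$ from Proposition~\ref{derivations}, constrain the left products via the right annihilator, and finish with Leibniz identities and the same basis changes used in Proposition~\ref{propo1}. Two points, however, do not match and would cause trouble if you carried them out as written.

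First, the paper treats $\lambda_4'(0)$ separately from $\lambda_4'(\beta)$ with $\beta\neq 0$, and this is not just bookkeeping. When $\beta=0$ the relation $[e_1,e_2]=\beta e_3$ vanishes, so your blanket claim ``$e_1,e_2\notin R(\ll)$'' can fail for $e_2$; the left product $[x,e_2]$ is then no longer forced to equal $-b_2e_2+\beta_3e_3$, and this extra freedom is precisely what produces the four algebras $\ll_4(\gamma),\ll_5,\ll_6,\ll_7$. For $\beta\neq 0$ the annihilator constraint is genuinely stronger and only the single family $\ll_8(\beta)$ survives.

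Second, your expectation for $\lambda_4'$ (the case $\alpha=\tfrac14$) is wrong: the paper finds that \emph{no} solvable extension exists there. Concretely, $e_3=[e_1,e_1]\in R(\ll)$ forces $[x,e_3]=0$, and the eigenvalues $1,2,3$ force $[x,e_1]=-e_1-a_2e_2+\alpha_3e_3$, $[x,e_2]=-2e_2+\beta_3e_3$; then the identity $[x,[e_1,e_2]]=[[x,e_1],e_2]-[[x,e_2],e_1]$ gives $0=e_3+2e_3=3e_3$, a contradiction. So you will not ``land in the appropriate member of the list'' in this branch but rather rule it out, and the statement is vacuous for $\alpha=\tfrac14$.
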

\begin{proof}

We have divided the proof into three steps: the first one is
relative to the study of $\lambda_4'(0),$ the other one to the
study of $\lambda_4'(\beta)$ with $\beta \neq 0,$ and the last one
to the study of $\lambda_4'.$

Let $N \hbox{ be isomrphic to
}\lambda_4'(0),$ by considering the same tools applied in Proposition \ref{propo1}, we obtain the algebras $\ll_{4}(\gamma)-\ll_{7}.$

Investigating the case $N \hbox{ isomorphic to }\lambda_4'(0),$ analogously to the previous proposition, we
have that the obtained $4$-dimensional Leibniz algebras are isomorphic to the algebras of the family $\ll_{8}(\beta).$

Finally, the study of the case $N$ isomorphic to
$\lambda_4',$ shows that there is no algebra in this case.
\end{proof}

 \begin{prop}
 Let $\ll$ be a $4$-dimensional solvable Leibniz algebra, whose nilradical is isomorphic to $\lambda_2.$
 Then, $\ll$ is isomorphic to one of the following pairwise non-isomorphic algebras:
 $$\small \begin{array}{llll}
  \ll_{9}(\gamma):\begin{cases}
 [e_1,e_2]=e_3,\\
 [e_1,x]=e_1, \\
 [e_2,x]=\gamma e_2, \\
 [e_3,x]=2e_3,\\
 [x,e_1]=-e_1,\\
 [x,e_2]=-\gamma e_2.
 \end{cases} &
\ll_{10}(\delta):\begin{cases}
 [e_1,e_1]=e_3,\\
 [e_1,x]=e_1, \\
 [e_2,x]=\delta e_2,\\
 [e_3,x]=2e_3\\
 [x,e_1]=-e_1.
 \end{cases} &
  \ll_{11}:\begin{cases}
 [e_1,e_1]=e_3,\\
 [e_1,x]=e_1, \\
 [e_3,x]=2e_3,\\
 [x,e_1]=-e_1,\\
 [x,x]=e_2.
 \end{cases} &
 \ll_{12}: \begin{cases}
 [e_1,e_1]=e_3,\\
 [e_1,x]=e_1, \\
 [e_2,x]=2e_2+e_3,\\
 [e_3,x]=2e_3,\\
 [x,e_1]=-e_1.
 \end{cases}\\ [10 mm]
 \ll_{13}:\begin{cases}
 [e_1,e_1]=e_3,\\
 [e_1,x]=e_1+e_2, \\
 [e_2,x]=e_2,\\
 [e_3,x]=2e_3,\\
 [x,e_1]=-e_1-e_2,\\
 [x,e_2]=-e_2.
 \end{cases} &
\ll_{14}:\begin{cases}
 [e_1,e_1]=e_3,\\
 [e_2,x]=e_2, \\
 [x,e_2]=-e_2.
  \end{cases} &
 \ll_{15}(\lambda): \begin{cases}
 [e_1,e_1]=e_3,\\
 [e_2,x]=e_2, \\
 [x,e_1]=e_3,\\
 [x,e_2]=-e_2,\\
 [x,x]=\lambda e_3.
 \end{cases} &
 \ll_{16}:\begin{cases}
 [e_1,e_1]=e_3,\\
 [e_2,x]=e_2, \\
 [x,e_2]=-e_2,\\
 [x,x]=-2e_3.
 \end{cases} \\ [10 mm]
 \ll_{17}: \begin{cases}
 [e_1,e_1]=e_3,\\
 [e_2,x]=e_2.
 \end{cases} &
\ll_{18}(\mu):\begin{cases}
 [e_1,e_1]=e_3,\\
 [e_2,x]=e_2, \\
 [x,e_1]=e_3,\\
 [x,x]=\mu e_3.
  \end{cases} &
 \ll_{19}: \begin{cases}
 [e_1,e_1]=e_3,\\
 [e_2,x]=e_2, \\
 [x,e_1]=e_3,\\
 [e_1,x]=e_3.
 \end{cases} &
 \end{array}$$
 where $ \gamma, \lambda, \mu \in \mathbb{C}$ and  $\delta \in \mathbb{C}\setminus \{0\}.$
 \end{prop}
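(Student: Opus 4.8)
The plan is to follow exactly the template used in Propositions \ref{propo1} and \ref{4}. By Proposition \ref{derivations} we have an explicit form for $\mathrm{Der}(\lambda_2)$, so starting from the nilradical law $[e_1,e_1]=e_3$ and a non-nilpotent derivation $R_x$ in the image of $\mathrm{Der}(\lambda_2)$, I would write the generic structure of $\ll$ as
\begin{align*}
[e_1,e_1]&=e_3, & [e_1,x]&=a_1e_1+a_2e_2+a_3e_3,\\
[e_2,x]&=b_2e_2+b_3e_3, & [e_3,x]&=2a_1e_3,
\end{align*}
together with the remaining products $[x,e_i]$ and $[x,x]$, whose $e_1,e_2$-components are constrained by the right-annihilator properties ($[e_i,x]+[x,e_i]\in R(\ll)$, $[x,x]\in R(\ll)$). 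The first main division is on whether $R_x$ acts with a nonzero eigenvalue on $e_1$ (the ``$a_1\neq 0$'' branch, which after rescaling $x$ gives $a_1=1$, and which will produce $\ll_9$ through $\ll_{13}$) or whether $a_1=0$, in which case the non-nilpotency of $R_x$ forces $b_2\neq 0$, normalized to $b_2=1$ (this branch yields $\ll_{14}$ through $\ll_{19}$).

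Within the $a_1=1$ branch I would first use basis changes of the form $e_1'=e_1+(\cdots)e_3$, $e_2'=e_2+(\cdots)e_3$, $x'=x+(\cdots)e_i$ to kill the ``off-diagonal'' constants $a_3,b_3$ and the $[x,x]$ contribution where possible, exactly as in Case~1 of Proposition \ref{propo1}; the residual freedom then bifurcates according to the value of $b_2$ (the cases $b_2\notin\{0,2\}$, $b_2=0$, $b_2=2$ behave differently because $e_3\in R(\ll)$ precisely when $b_2\neq 2$, and the extra nilpotent block in $\mathrm{Der}(\lambda_2)$ is only available when $b_2=2$) and according to whether $a_2$ can be scaled to $0$ or $1$. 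Imposing the Leibniz identities $[x,[e_1,x]]$, $[x,[e_2,x]]$, $[x,[x,x]]$ and $[x,[e_1,e_1]]$ pins down the $e_3$-components $\alpha_3,\beta_3$ of $[x,e_1],[x,e_2]$ and the coefficient of $[x,x]$, producing the split into $\ll_9(\gamma)$, $\ll_{10}(\delta)$, $\ll_{11}$, $\ll_{12}$, $\ll_{13}$. The $a_1=0$ branch is handled the same way with the roles of $e_1$ and $e_2$ interchanged in spirit: now $R_x$ is essentially ``diagonal'' with eigenvalue $1$ on $e_2$ and nilpotent on $\langle e_1,e_3\rangle$, the products $[x,e_1]=\mu_1 e_3$, $[x,x]=\mu_2 e_3$, $[e_1,x]=a_3 e_3$ survive with only $e_3$-components, and the Leibniz identities plus the remaining $\mathbb{C}^\times$-action on $(e_1,e_3,x)$ reduce the parameters to give $\ll_{14}$--$\ll_{19}$ (in particular $\ll_{15}(\lambda)$ and $\ll_{18}(\mu)$ retain a genuine one-parameter family because the scaling that would normalize the constant is used up elsewhere).

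Finally I would verify that the listed algebras are pairwise non-isomorphic; for this the cheap invariants suffice — the dimensions of $R(\ll)$, of $\ll^2$, of the right annihilator of $\ll^2$, the rank and Jordan type of $R_x$ on the nilradical, whether $[x,x]=0$, and the set of eigenvalues of $R_x$ (which distinguishes members of the parametrized families $\ll_9(\gamma)$, $\ll_{10}(\delta)$, $\ll_{15}(\lambda)$, $\ll_{18}(\mu)$ up to the obvious identifications). I expect the main obstacle to be bookkeeping rather than conceptual: the case $b_2=2$ (resp. the degenerate sub-branches where $[x,x]\neq 0$ or $a_2\neq 0$) interacts with the extra nilpotent entry in $\mathrm{Der}(\lambda_2)$ and with several simultaneous normalizations, so one must be careful that the basis changes used to simplify one product do not reintroduce constants in another, and that the parametrized families are presented in a normal form on which the isomorphism question is transparent. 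Since the techniques are identical to those of Propositions \ref{propo1} and \ref{4}, I would compress the routine computations and only display the nontrivial basis transformations and the Leibniz-identity constraints at each branch point.
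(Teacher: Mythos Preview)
Your plan is correct and matches the paper's approach exactly: the paper's own proof consists of the single sentence ``The proof is analogously to the above Propositions,'' and your outline is precisely the analogous computation carried out in the style of Propositions~\ref{propo1} and~\ref{4}. Your case split on $a_1\neq 0$ versus $a_1=0$ (with the further subdivision on $b_2$) and the use of the right-annihilator constraints and Leibniz identities are the expected steps, so there is nothing to add.
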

 \begin{proof} The proof is analogously to the above Propositions.
 \end{proof}

\begin{prop}
 Let $\ll$ be a $4$-dimensional solvable Leibniz algebra, whose nilradical
  is $3$-dimensional  abelian algebra. Then $\ll$ is isomorphic to one of the following pairwise non isomorphic algebras
$$ \footnotesize \begin{array}{lll}
\ll_{20}(\mu_2,\mu_3):\begin{cases}
 [e_1,x]=e_1,\\
 [e_2,x]=\mu_2e_2,\\
 [e_3,x]=\mu_3e_3,\\
 [x,e_1]=-e_1,\\
 [x,e_2]=-\mu_2e_2,\\
 [x,e_3]=-\mu_3e_3,
  \end{cases}&
\ll_{21}(\mu_2, \mu_3):\begin{cases}
 [e_1,x]=e_1,\\
 [e_2,x]=\mu_2e_2,\\
 [e_3,x]=\mu_3e_3, \ \mu_3 \neq 0,\\
 [x,e_1]=-e_1,\\
 [x,e_2]=-\mu_2e_2,\\
 \end{cases}& \ll_{22}(\mu_2, \mu_3):\begin{cases}
 [e_1,x]=e_1,\\
 [e_2,x]=\mu_2e_2,\ \mu_2 \neq 0,\\
 [e_3,x]=\mu_3e_3, \ \mu_3 \neq 0,\\
 [x,e_1]=-e_1,\\
 \end{cases}\end{array}$$
  $$\footnotesize \begin{array}{lll}
 \ll_{23}(\mu_2, \mu_3):\begin{cases}
 [e_1,x]=e_1,\\
 [e_2,x]=\mu_2e_2,\\
 [e_3,x]=\mu_3e_3,\\
 \end{cases}&
\ll_{24}(\mu_2):\begin{cases}
 [e_1,x]=e_1,\\
 [e_2,x]=\mu_2e_2,\\
 [x,e_1]=-e_1,\\
 [x,e_2]=-\mu_2e_2,\\
 [x,x]=e_3.
  \end{cases}&
\ll_{25}(\mu_2):\begin{cases}
 [e_1,x]=e_1,\\
 [e_2,x]=\mu_2e_2, \ \mu_2 \neq 0,\\
 [x,e_1]=-e_1,\\
 [x,x]=e_3.
  \end{cases}\\[1mm]
  \ll_{26}(\mu_2):\begin{cases}
 [e_1,x]=e_1,\\
 [e_2,x]=\mu_2e_2,\\
 [x,x]=e_3.
  \end{cases}&\ll_{27}:\begin{cases}
 [e_1,x]=e_1,\\
 [x,e_1]=-e_1,\\
 [x,e_2]=e_3,\\
 \end{cases}&
 \ll_{28}:\begin{cases}
 [e_1,x]=e_1,\\
 [x,e_2]=e_3.\\
 \end{cases}\\[1mm]
 \ll_{29}(\mu_3):\begin{cases}
          [e_1,x]=e_1+e_2,\\
          [e_2,x]=e_2,\\
          [e_3,x]=\mu_3e_3,\\
          [x,e_1]=-e_1-e_2,\\
          [x,e_2]=-e_2,\\
                  \end{cases}&
\ll_{30}:\begin{cases}
                    [e_1,x]=e_1+e_2,\\
                    [e_2,x]=e_2,\\
                    [x,e_1]=-e_1-e_2,\\
                    [x,e_2]=-e_2,\\
                    [x,x]=e_3,
                            \end{cases}
&\ll_{31}(\mu_3):\begin{cases}
          [e_1,x]=e_1+e_2,\\
          [e_2,x]=e_2,\\
          [e_3,x]=\mu_3e_3,\\
          [x,e_1]=-e_1-e_2,\\
          [x,e_2]=-e_2,\\
          [x,e_3]=-\mu_3e_3,
                  \end{cases}\ \mu_3 \neq 0,\\[1mm]
\ll_{32}(\mu_3):\begin{cases}
          [e_1,x]=e_1+e_2,\\
          [e_2,x]=e_2,\\
          [e_3,x]=\mu_3e_3,\\
                 \end{cases}&
\ll_{33}:\begin{cases}
                    [e_1,x]=e_1+e_2,\\
                    [e_2,x]=e_2,\\
                    [x,x]=e_3,
                            \end{cases}&
\ll_{34}(\mu_3):\begin{cases}
          [e_1,x]=e_1+e_2,\\
          [e_2,x]=e_2,\\
          [e_3,x]=\mu_3e_3,\\
          [x,e_3]=-\mu_3e_3,
                  \end{cases} \ \mu_3 \neq 0, \\[1mm]
\ll_{35}(\alpha): \begin{cases}
   [e_1,x]=e_2,\\
   [e_3,x]=e_3,\\
   [x,e_1]=\alpha e_2,\\
   [x,e_3]=-e_3,
   \end{cases}&
\ll_{36}:\begin{cases}
        [e_1,x]=e_2,\\
        [e_3,x]=e_3,\\
        [x,e_1]=-e_2,\\
        [x,e_3]=-e_3,\\
        [x,x]=e_2.
        \end{cases}&
\ll_{37}:\begin{cases}
  [e_1,x]=e_2,\\
  [e_3,x]=e_3,\\
  [x,e_3]=-e_3,\\
  [x,x]=e_1.
  \end{cases}\\[1mm]
\ll_{38}(\alpha):\begin{cases}
  [e_1,x]=e_2,\\
  [e_3,x]=e_3,\\
  [x,e_1]=\alpha e_2,\\
  \end{cases}&
\ll_{39}:\begin{cases}
   [e_1,x]=e_2,\\
   [e_3,x]=e_3,\\
   [x,e_1]=-e_2,\\
   [x,x]=e_2.
   \end{cases}&
\ll_{40}:\begin{cases}
   [e_1,x]=e_2,\\
   [e_3,x]=e_3,\\
   [x,x]=e_1.
   \end{cases} \\[1mm]
   \ll_{41}:\begin{cases}
  [e_1,x]=e_1+e_2,\\
  [e_2,x]=e_2+e_3,\\
  [e_3,x]=e_3.
  \end{cases}&
\ll_{42}:\begin{cases}
  [e_1,x]=e_1+e_2,\\
  [e_2,x]=e_2+e_3,\\
  [e_3,x]=e_3,\\
  [x,e_1]=e_1-e_2,\\
  [x,e_2]=e_2-e_3,\\
  [x,e_3]=e_3,
  \end{cases}&
\end{array}$$
where, if one does not specify, $\mu_i \in \mathbb{C}.$
 \end{prop}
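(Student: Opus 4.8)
The plan is to follow the pattern of the preceding propositions (in particular Propositions~\ref{propo1} and~\ref{4}). First I would fix a basis $\{x,e_1,e_2,e_3\}$ with $N=\langle e_1,e_2,e_3\rangle$ the nilradical, which by hypothesis is abelian of dimension $3$, and with $R_x|_N$ non-nilpotent. Since $N$ is abelian, every endomorphism of $N$ is a derivation, so the only restriction on $A:=R_x|_N$ is non-nilpotency; moreover replacing $x$ by $\lambda x+n$ ($\lambda\neq 0$, $n\in N$) replaces $A$ by $\lambda A$ (because $R_n|_N=0$), while a change of basis of $N$ conjugates $A$. Up to this equivalence $A$ may therefore be taken in one of the following four Jordan shapes, which are precisely the non-nilpotent $3\times 3$ Jordan types after a nonzero eigenvalue is scaled to $1$:
\begin{itemize}
\item[(i)] $A=\mathrm{diag}(1,\mu_2,\mu_3)$: $[e_1,x]=e_1$, $[e_2,x]=\mu_2 e_2$, $[e_3,x]=\mu_3 e_3$;
\item[(ii)] a $2\times 2$ Jordan block with eigenvalue $1$ together with a $1\times 1$ block $\mu_3$: $[e_1,x]=e_1+e_2$, $[e_2,x]=e_2$, $[e_3,x]=\mu_3 e_3$;
\item[(iii)] a single $3\times 3$ Jordan block with eigenvalue $1$: $[e_1,x]=e_1+e_2$, $[e_2,x]=e_2+e_3$, $[e_3,x]=e_3$;
\item[(iv)] a nilpotent $2\times 2$ block together with eigenvalue $1$: $[e_1,x]=e_2$, $[e_2,x]=0$, $[e_3,x]=e_3$.
\end{itemize}
I expect these four cases to produce, respectively, the families $\ll_{20}$--$\ll_{28}$, $\ll_{29}$--$\ll_{34}$, $\ll_{41}$--$\ll_{42}$ and $\ll_{35}$--$\ll_{40}$.

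For each shape I would then determine the remaining products $[x,e_1],[x,e_2],[x,e_3],[x,x]$, all of which lie in $N$ since $\ll^2\subseteq N$. Writing $L_x$ for the operator $n\mapsto[x,n]$ on $N$, one checks, using that $N$ is abelian, that $R(\ll)=\ker L_x$; the general facts $[e_i,x]+[x,e_i]\in R(\ll)$ and $[x,x]\in R(\ll)$ then become $L_x(Ae_i+L_xe_i)=0$ and $L_x[x,x]=0$. Evaluating the Leibniz identity on the triple $(x,e_i,x)$ yields $L_xA=AL_x$ on $N$, hence $(A+L_x)L_xe_i=0$; together with $Ae_i+L_xe_i\in\ker L_x$ this forces $A+L_x$ to map $N$ into $\ker L_x$, so $[x,e_i]$ is determined by $[e_i,x]$ up to a correction in the right annihilator. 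The residual freedom — conjugations of $N$ commuting with the fixed Jordan form, the rescalings of $x$ and of the $e_i$ still allowed by that form, and the shifts $x\mapsto x+n$ with $n\in N$, which modify $[x,x]$ by $(R_x+L_x)(n)$ — is then used to normalise the surviving constants: this is why $[x,x]$ vanishes in the nondegenerate branches, while when $A$ has a nontrivial kernel an irreducible term $[x,x]=e_k$ or a genuine modulus such as the $\alpha$ of $\ll_{35}$ and $\ll_{38}$ survives. Running through the four shapes should give exactly the algebras listed, the side conditions ($\mu_3\neq 0$ in $\ll_{21}$, $\mu_2\neq 0$ in $\ll_{22}$, and so on) being imposed precisely to delete overlaps.

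Finally I would check that $\ll_{20}$--$\ll_{42}$ are pairwise non-isomorphic, using the invariants $\dim R(\ll)$, $\dim\ll^2$, the derived and lower-central series, whether $[x,x]=0$, and — decisively — the conjugacy class of $R_x|_N$ up to a common nonzero scalar, equivalently its Jordan type together with the unordered ratios of its eigenvalues; any isomorphism preserves the latter since it carries $N$ onto $N$ and $x$ to $\lambda x+n$, conjugating $A$ and multiplying it by $\lambda$. These invariants separate the four shapes from each other and, within a shape, the individual algebras and parameter values — for instance $\ll_{20}(\mu_2,\mu_3)\cong\ll_{20}(\mu_2',\mu_3')$ exactly when $\{1,\mu_2,\mu_3\}$ and $\{1,\mu_2',\mu_3'\}$ agree after a common rescaling and reordering.

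The step I expect to be the main obstacle is the bookkeeping in the degenerate branches — shape (i) with one or two zero eigenvalues, and shape (iv) — where an $e_i$ with $Ae_i=0$ may or may not lie in $R(\ll)$, the shift $x\mapsto x+n$ no longer suffices to remove all of $[x,x]$, and one must track carefully which Leibniz identities still carry information. This is where the long tail $\ll_{27},\ll_{28},\ll_{33}$ and $\ll_{35}$--$\ll_{40}$ with their continuous parameters comes from, and where both completeness of the classification and the absence of redundancy are hardest to guarantee.
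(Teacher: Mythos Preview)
Your plan is essentially the paper's own: both put $R_x|_N$ into Jordan normal form and then run the case analysis, with your four shapes corresponding to the paper's three (your case~(iv) is simply the $\mu_1=0$ subcase of the paper's $J_2(\mu_1)\oplus(\mu_3)$ form, and the paper indeed extracts $\ll_{29}$--$\ll_{40}$ from that single block type). The only noteworthy difference is that the paper works directly with structure constants and the raw Leibniz identities rather than your operator relations $L_xA=AL_x$ and $(A+L_x)L_x=0$, and it defers the pairwise non-isomorphism check to a computer program rather than the invariant argument you sketch.
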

 \begin{proof}
 Let us consider the basis $\{x,e_1,e_2,e_3\}$ of $\ll.$ It is easy to prove that, by using simple change of basis, the matrix of $R_x$
has one of the following forms:
 $$\begin{array}{ccc}
 \left(\begin{array}{ccc}
 \mu_1 & 0 & 0\\
 0 & \mu_2 & 0\\
 0 & 0 & \mu_3
 \end{array}\right), &
  \left(\begin{array}{ccc}
\mu_1 & 1 & 0\\
0 & \mu_1 & 0\\
0 & 0 & \mu_3
 \end{array}\right), &
  \left(\begin{array}{ccc}
\mu_1 & 1 & 0\\
0 & \mu_1 & 1\\
0 & 0 & \mu_1
 \end{array}\right).
 \end{array}$$

 \

Let $R_x \simeq \left(\begin{array}{ccc}
 \mu_1 & 0 & 0\\
 0 & \mu_2 & 0\\
 0 & 0 & \mu_3
 \end{array}\right),$ then law of $\ll$ can be written as follows:

 $$\begin{cases}
 [e_1,x]=\mu_1e_1,&[x,e_1]=\alpha_1e_1+\alpha_2e_2+\alpha_3e_3,\\
 [e_2,x]=\mu_2e_2,&[x,e_2]=\beta_1e_1+\beta_2e_2+\beta_3e_3,\\
 [e_3,x]=\mu_3e_3,&[x,e_3]=\gamma_1e_1+\gamma_2e_2+\gamma_3e_3,\\
 [x,x]=\delta_1e_1+\delta_2e_2+\delta_3e_3.
 \end{cases}$$

 By the Leibniz identities $[x,[e_1,x]],$  $[x,[e_2,x]]$ and $[x,[e_3,x]],$ we obtain the following restrictions:
  \begin{equation} \label{1.1}
  \begin{array}{cc}
 \alpha_2(\mu_1-\mu_2)=0,&
 \alpha_3(\mu_1-\mu_3)=0,\\
 \beta_1(\mu_1-\mu_2)=0,&
 \beta_3(\mu_2-\mu_3)=0,\\
 \gamma_1(\mu_1-\mu_3)=0,&
 \gamma_2(\mu_2-\mu_3)=0.
 \end{array}
 \end{equation}

 By considering the Leibniz identities $[x,[x,e_1]],$ $[x,[x,e_2]]$ and $[x,[x,e_3]],$ we conclude:
\begin{equation}\label{1.2}
\begin{array}{ccc}
(\mu_1+\alpha_1)\alpha_1+\alpha_2 \beta_1+\alpha_3 \gamma_1=0,&
(\mu_1+\alpha_1)\alpha_2+\alpha_2 \beta_2+\alpha_3 \gamma_2=0,&
(\mu_1+\alpha_1)\alpha_3+\alpha_2 \beta_3+\alpha_3 \gamma_3=0,\\[1mm]
\beta_1\alpha_1+(\beta_2+\mu_2) \beta_1+\beta_3 \gamma_1=0,&
\beta_1\alpha_2+(\beta_2+\mu_2) \beta_2+\beta_3 \gamma_2=0,&
\beta_1\alpha_3+(\beta_2+\mu_2) \beta_3+\beta_3 \gamma_3=0,\\[1mm]
\gamma_1\alpha_1+ \gamma_2 \beta_1+(\gamma_3+\mu_3) \gamma_1=0,&
\gamma_1\alpha_2+ \gamma_2 \beta_2+(\gamma_3+\mu_3) \gamma_2=0,&
\gamma_1\alpha_3+ \gamma_2 \beta_3+(\gamma_3+\mu_3) \gamma_3=0,
\end{array}
\end{equation}
and by the Leibniz identity $[x,[x,x]]$ we get
\begin{equation} \label{1.3}
 \alpha_1\delta_1+\beta_1\delta_2+\gamma_1\delta_3=0,\quad
 \alpha_2\delta_1+\beta_2\delta_2+\gamma_2\delta_3=0,\quad
 \alpha_3\delta_1+\beta_3\delta_2+\gamma_3\delta_3=0.
 \end{equation}

In order to determine the remaining structural constants, we distinguish the following cases:

\textit{\bf{Case 1.}} Let $\mu_1\neq \mu_2,$ $\mu_1 \neq \mu_3,$
$\mu_2 \neq \mu_3,$ thanks to the restrictions (\ref{1.1}) and
(\ref{1.2}) we have
$$\alpha_2=\alpha_3=\beta_1=\beta_3=\gamma_1 = \gamma_2=0,$$
\begin{equation} \label{1.4}
 \begin{array}{ccc}(\mu_1+\alpha_1)\alpha_1=0, & (\beta_2+\mu_2) \beta_2=0, & (\gamma_3+\mu_3) \gamma_3=0,\\[1mm]
 \alpha_1\delta_1=0,&\beta_2\delta_2=0,& \gamma_3\delta_3=0.
 \end{array}
 \end{equation}

\textit{\bf{Case 1.1.}} Let $\mu_1\mu_2\mu_3 \neq 0,$ then we can assume $\mu_1=1,$ without loss of generality. Therefore it is worthwhile to consider the following possibilities:

\begin{itemize}
\item If $\alpha_1 \neq 0,$ $\beta_2 \neq 0,$ $\gamma_3 \neq 0,$
 according to the restriction (\ref{1.4}), we have
 $$\alpha_1 = -1, \quad \beta_2 = -\mu_2, \quad \gamma_3 = -\mu_3, \qquad \delta_1= \delta_2= \delta_3= 0.$$
Hence we obtain the algebra $\ll_{20}(\mu_2,\mu_3)$ for $\mu_2\neq 1, \
\mu_3\neq 1, \ \mu_2\neq \mu_3$ and $\mu_2\mu_3 \neq 0.$

\item If one of the parameters $\alpha_1, \beta_2, \gamma_3$ equals to zero
 and two of them unequal to zero, then there is no loss of generality in assuming $\alpha_1 \neq 0,$ $\beta_2 \neq 0$ and
 $\gamma_3=0.$ Due to the restriction (\ref{1.4}), we come to
 $$\alpha_1 = -1, \quad \beta_2 = -\mu_2, \qquad \delta_1= \delta_2= 0.$$

It suffices to make the transformation $x'=x-\frac{\delta_3}{\mu_3}e_3,$ to we get
$\delta_3=0$ and to obtain the algebra $\ll_{21}(\mu_2,\mu_3)$ for $\mu_2\neq 1, \
\mu_3\neq 1, \ \mu_2\neq \mu_3$ and $\mu_2\mu_3 \neq 0.$

\item
If two of the parameters $\alpha_1, \beta_2, \gamma_3$ equal zero
 and one of them unequals zero, then we can suppose $\alpha_1 \neq 0,$$\beta_2 = 0$ and
 $\gamma_3=0,$ without loss of generality.
By the restriction (\ref{1.4}), we have
 $$\alpha_1 = -1, \qquad \delta_1= 0.$$

By taking the transformation $x'=x-\frac{\delta_2}{\mu_2}e_2-\frac{\delta_3}{\mu_3}e_3,$
 we get $\delta_2=\delta_3=0$ and come to the algebra
 $\ll_{22}(\mu_2,\mu_3)$ for $\mu_2\neq 1, \ \mu_3\neq 1, \ \mu_2\neq
\mu_3$ and $\mu_2\mu_3 \neq 0.$

\item
If $\alpha_1 = 0,$  $\beta_2 = 0$ and
 $\gamma_3=0,$
then, thanks to the linear transformation $x'=x-\frac{\delta_1}{\mu_2}e_1-\frac{\delta_2}{\mu_2}e_2-\frac{\delta_3}{\mu_3}e_3$
 we get $\delta_1=\delta_2=\delta_3=0.$ Hence the algebra
 $\ll_{23}(\mu_2,\mu_3)$ is obtained for $\mu_2\neq 1, \ \mu_3\neq 1, \ \mu_2\neq
\mu_3$ and $\mu_2\mu_3 \neq 0$.
\end{itemize}

\textit{\bf{Case 1.2.}} Let one of the parameters $\mu_1, \mu_2, \mu_3$ equal zero then, without loss of generality,
we can assume $\mu_1=1,$ $\mu_3=0.$ Moreover, thanks to the restriction (\ref{1.4}), we have $\gamma_3=0.$

\begin{itemize}
\item Let  $\alpha_1 \neq 0,$ $\beta_2 \neq 0,$
 then due to the restriction (\ref{1.4}), we have
 $$\alpha_1 = -1, \quad \beta_2 = -\mu_2, \qquad \delta_1= \delta_2= 0.$$ It is enough to consider the following cases:
    \begin{itemize}
    \item If $\delta_3=0,$ the algebra $\ll_{20}(\mu_2,\mu_3)$ is obtained for $\mu_2
        \notin \{0,1\}$ and $\mu_3=0.$
    \item If $\delta_3\neq 0,$ by taking the basis
        transformation $e_3'=\delta_3e_3,$ we get the algebra
        $\ll_{24}(\mu_2)$ for $\mu_2 \notin \{0, 1\}.$
    \end{itemize}

\item  Let only one of the parameters $\alpha_1, \beta_2$ equals zero. Then, without loss of generality, we can assume
 $\alpha_1 \neq 0$ and $\beta_2 = 0.$ Moreover, by the restriction (\ref{1.4}) we assert
 $$\alpha_1 = -1,  \quad \delta_1= 0.$$

By making the basis transformation $x'=x-\frac{\delta_2}{\mu_2}e_2$
 we get $\delta_2=0.$ This case is completed by considering the following restrictions:
    \begin{itemize}
        \item If $\delta_3=0,$ we arrive at the algebra $\ll_{22}(\mu_2, \mu_3)$ for $\mu_2 \notin \{0, 1\}$ and $ \mu_3=0.$
        \item If $\delta_3\neq 0,$ by taking the basis
        transformation $e_3'=\delta_3e_3,$ we get to the algebra
        $\ll_{25}(\mu_2)$ for $\mu_2\notin \{0, 1\}.$
    \end{itemize}

\item  Let $\alpha_1=0, \beta_2=0.$ By taking the basis transformation $x'=x-\frac{\delta_1}{\mu_1}e_2-\frac{\delta_2}{\mu_2}e_2,$
 we obtain $\delta_1=\delta_2=0.$ It is enough to consider the following cases:
    \begin{itemize}
        \item If $\delta_3=0,$ then we get the algebra $\ll_{23}(\mu_2, \mu_3)$ for $\mu_2\notin \{0, 1\}$ and $\mu_3=0.$
        \item If $\delta_3\neq 0,$ then taking the basis
        transformation $e_3'=\delta_3e_3,$ we get the algebra
        $\ll_{26}(\mu_2)$ for $\mu_2\notin \{0, 1\}.$
    \end{itemize}
\end{itemize}

 \textit{\bf{Case 2.}} Let two of the parameters $\mu_1, \mu_2, \mu_3$ be equal. Then, without loss of generality, we can assume $\mu_1=\mu_2.$ It is interesting to distinguish the following cases:

 \textit{\bf{Case 2.1.}} Let $\mu_1=\mu_2\neq 0.$ Then it is clear that we can assume
  $\mu_1=\mu_2=1$ and $\mu_3 \neq 1.$
Moreover, due to (\ref{1.1}), (\ref{1.2}) and (\ref{1.3}), we come to the following restrictions:
$$\alpha_3=\beta_3= \gamma_1=\gamma_2=0,$$
\begin{equation}\label{1.5}
\begin{array}{lll}
(1+\alpha_1)\alpha_1+\alpha_2 \beta_1=0,&
(1+\alpha_1)\alpha_2+\alpha_2 \beta_2=0,&\\[1mm]
\beta_1\alpha_1+(1+\beta_2) \beta_1=0,&
\beta_1\alpha_2+(1+\beta_2) \beta_2=0,&
(\gamma_3+\mu_3) \gamma_3=0,\\[1mm]
 \alpha_1\delta_1+\beta_1\delta_2=0,&
 \alpha_2\delta_1+\beta_2\delta_2=0,&
\gamma_3\delta_3=0.
\end{array}
 \end{equation}

Note that for any element $y  \in \{e_1, e_2\},$ we have $[y,x] =y.$ Thus, by using any change of basis of $\{e_1, e_2\}$, it is sure that the products $ [e_1,x]=e_1$ and $[e_2,x]=e_2$ stay unchanged.

By means of appropriate basis transformation, the Jordan block of the matrix
$\left(\begin{array}{cc}\alpha_1&\alpha_2\\
\beta_1&\beta_2
\end{array}\right)$ can take either this form $\left(\begin{array}{cc}\alpha_1&0\\
0&\beta_2
\end{array}\right)$ or this one $\left(\begin{array}{cc}\alpha_1&1\\
0&\alpha_1
\end{array}\right).$

 We can consider the following possibilities:

\textit{\bf{Case 2.1.1.}} If $\left(\begin{array}{cc}\alpha_1&\alpha_2\\
\beta_1&\beta_2
\end{array}\right) \simeq \left(\begin{array}{cc}\alpha_1&0\\
0&\beta_2
\end{array}\right),$ i.e., $\alpha_2=\beta_1=0,$ then thanks to (\ref{1.5}), we have the following restrictions:
$$
(1+\alpha_1)\alpha_1, \quad (1+\beta_2) \beta_2=0, \quad (\gamma_3+\mu_3) \gamma_3=0,
$$
$$
 \alpha_1\delta_1=0,\quad
\beta_2\delta_2=0,\quad
\gamma_3\delta_3=0.
$$

It is clear that this case is similar to Case 1, with $\mu_1=\mu_2=1.$ Hence, by using similar tools, we get the following algebras:

\begin{itemize}
  \item If $\mu_3 \neq 0,$
    \begin{itemize}
              \item the algebra $\ll_{20}(\mu_2, \mu_3)$ is obtained for $\mu_2=1$ and $\mu_3\neq 1,$ by considering $\alpha_1 \neq 0,$ $\beta_2 \neq 0$ and $\gamma_3 \neq 0.$
              \item the algebra $\ll_{21}(\mu_2, \mu_3)$ is obtained for $\mu_2=1$ and $ \mu_3\neq 1,$ by considering $\alpha_1 \neq 0,$ $\beta_2 \neq 0$ and $\gamma_3 = 0.$
              \item the algebra $\ll_{21}(\mu_2, \mu_3)$ is obtained for $\mu_3=1$ and $ \mu_2\neq 1,$ by considering  $\alpha_1 \neq 0,$ $\beta_2 = 0$ and $\gamma_3 \neq 0.$
              \item the algebra $\ll_{22}(\mu_2, \mu_3)$ is obtained for $\mu_2=1$ and $\mu_3\neq 1,$ by considering $\alpha_1 \neq 0,$ $\beta_2 = 0$ and $\gamma_3 = 0.$
              \item the algebra $\ll_{23}(\mu_2, \mu_3)$ is obtained for $\mu_2=1$ and $ \mu_3\neq 1,$ by considering $\alpha_1 = 0,$ $\beta_2 = 0$ and $\gamma_3 = 0.$
    \end{itemize}

  \item If $\mu_3 = 0,$
    \begin{itemize}
             \item the algebra $\ll_{24}(\mu_2)$ is obtained for $\mu_2=1$ and $ \mu_3\neq 1,$ by considering $\alpha_1 \neq 0$ and $\beta_2 \neq 0.$
             \item the algebra $\ll_{25}(\mu_2)$ is obtained for $\mu_2=1$ and $ \mu_3\neq 1,$ by considering $\alpha_1 \neq 0$ and $\beta_2 = 0.$
             \item the algebra $\ll_{26}(\mu_2)$ is obtained for $\mu_2=1$ and $ \mu_3\neq 1,$ by considering $\alpha_1 = 0$ and $\beta_2 = 0.$
    \end{itemize}
\end{itemize}

\textit{\bf{Case 2.1.2.}} If $\left(\begin{array}{cc}\alpha_1&\alpha_2\\
\beta_1&\beta_2
\end{array}\right) \simeq \left(\begin{array}{cc}\alpha_1&1\\
0&\alpha_1
\end{array}\right),$ i.e., $\alpha_2=1, \  \beta_1=0, \ \beta_2=\alpha_2.$ Due to the restriction (\ref{1.5})
we obtain the system $(1+\alpha_1)\alpha_1=0, \quad 1+2\alpha_1=0,$ which has no solution.
Therefore in this case we do not have any algebra.

\textit{\bf{Case 2.2.}} Let $\mu_1=\mu_2=0,$ then we can take $\mu_3= 1,$ without loss of generality. It suffices to make some basis transformations to prove that $\mu_1= 1,$ $\mu_2=\mu_3=0.$

By the restrictions (\ref{1.1},)
(\ref{1.2}) and (\ref{1.3}), we have
$\alpha_2=\alpha_3=\beta_1= \gamma_1=0$ and
\begin{equation}\label{1.6}
\begin{array}{lll}
(1+\alpha_1)\alpha_1=0,&
\beta_2^2+\beta_3\gamma_2=0,&
\beta_2\beta_3+\beta_3\gamma_3=0,\\[1mm]
& \gamma_2\beta_2+\gamma_3\gamma_2=0,&
\gamma_2\beta_3+\gamma_3^2=0,\\[1mm]
 \alpha_1\delta_1=0,&
 \beta_2\delta_2+\gamma_2\delta_3=0,&
 \beta_3\delta_2+\gamma_3\delta_3=0.
 \end{array}
 \end{equation}

Analogously to Case 2.1, we consider two subcases:

\textit{\bf{Case 2.2.1.}} If $\left(\begin{array}{cc}
\beta_2&\beta_3\\[1mm]
\gamma_2& \gamma_3
\end{array}\right) \simeq \left(\begin{array}{cc}\beta_2&0\\
0&\gamma_3
\end{array}\right),$ i.e., $\beta_3=\gamma_2=0.$ Thanks to (\ref{1.6}),
we obtain the following restriction $\beta_2=\gamma_3=0,$ $(1+\alpha_1)\alpha_1=0$ and $\alpha_1\delta_1=0.$

\begin{itemize}
\item If $\alpha_1=-1,$ hence $\delta_1=0.$

If $(\delta_2,\delta_3) = (0,0),$ the algebra
$\ll_{20}(\mu_2, \mu_3)$ for $\mu_2=\mu_3=0$ is obtained.

If $(\delta_2,\delta_3) \neq (0,0),$ it is enough to make the change $e_3'=\delta_2e_2+\delta_3e_3,$
to obtain the algebra $\ll_{24}(\mu_2)$ for $\mu_2=0.$

\item If $\alpha_1=0,$ then by taking the change $x'=x-\delta_1e_1,$ we get $[x,x]=\delta_2e_2+\delta_3e_3.$

If $(\delta_2,\delta_3) = (0,0),$ the algebra $\ll_{23}(\mu_2, \mu_3)$ is obtained for $\mu_2=\mu_3=0.$

If $(\delta_2,\delta_3) \neq (0,0),$ it is suffices to make the basis transformation $e_3'=\delta_2e_2+\delta_3e_3,$ to come to the algebra $\ll_{26}(\mu_2)$ for
$\mu_2=0.$
\end{itemize}

\textit{\bf{Case 2.2.2.}} If $\left(\begin{array}{cc}
\beta_2&\beta_3\\[1mm]
\gamma_2& \gamma_3
\end{array}\right) \simeq \left(\begin{array}{cc}\beta_2&1\\
0&\beta_2
\end{array}\right),$ i.e., $\beta_3=1, \ \gamma_2=0$ and $ \gamma_3 =\beta_2.$ Due to (\ref{1.6}),
we have the following restrictions $\beta_2=0,$ $(1+\alpha_1)\alpha_1=0,$ $\alpha_1\delta_1=0$ and $\delta_2=0.$
\begin{itemize}
\item If $\alpha_1=-1,$ hence $\delta_1=0$ and the structural constant $\delta_3$ is determined by taking the change $x'=x-\delta_3e_2.$ Thus, the algebra $\ll_{27}$ is obtained.

\item If $\alpha_1=0,$ then making the change of basis $x'=x-\delta_1e_1-\delta_3e_2,$ the algebra
$\ll_{28}$ is obtained.
\end{itemize}

\textit{\bf{Case 3.}} Let $\mu_1=\mu_2=\mu_3=1,$ then we consider the following subcases.

\textit{\bf{Case 3.1.}} Let $\left(\begin{array}{ccc}\alpha_1&\alpha_2&\alpha_3\\
\beta_1&\beta_2&\beta_3\\
\gamma_1&\gamma_2&\gamma_3
\end{array}\right) \simeq \left(\begin{array}{ccc}\alpha_1&0&0\\
0&\beta_2&0\\
0&0&\gamma_3\\
\end{array}\right),$ i.e., $\alpha_2=\alpha_3=\beta_1=\beta_3=\gamma_1=\gamma_2=0.$

Thanks to the restrictions (\ref{1.2}) and (\ref{1.3}), we have
$(1+\alpha_1)\alpha_1=0,$ $(1+\beta_2) \beta_2=0,$ $(1+\gamma_3) \gamma_3=0,$ $\alpha_1\delta_1=0,$ $\beta_2\delta_2=0$ and $\gamma_3\delta_3=0.$
It is clear that this case is similar to Case 1, with $\mu_1=\mu_2=\mu_3=1.$ Hence, by using similar tools, we get the following algebras:

 \begin{itemize}
   \item $\ll_{20}(\mu_2, \mu_3)$ for $\mu_2=\mu_3=1,$ by considering $\alpha_1 \neq 0,$ $\beta_2 \neq 0$ and $\gamma_3 \neq 0.$
   \item $\ll_{21}(\mu_2, \mu_3)$ for $\mu_2=\mu_3=1,$ by considering $\alpha_1 \neq 0,$ $\beta_2 \neq 0$ and $\gamma_3 = 0.$
   \item $\ll_{22}(\mu_2, \mu_3)$ for $\mu_2=\mu_3=1,$ by considering $\alpha_1 \neq 0,$ $\beta_2 = 0$ and $\gamma_3 = 0.$
   \item $\ll_{23}(\mu_2, \mu_3)$ for $\mu_2=\mu_3=1,$ by considering $\alpha_1 = 0,$ $\beta_2 = 0$ and $\gamma_3 = 0.$
 \end{itemize}

 \textit{\bf{Case 3.2.}} Let $\left(\begin{array}{ccc}\alpha_1&\alpha_2&\alpha_3\\
\beta_1&\beta_2&\beta_3\\
\gamma_1&\gamma_2&\gamma_3
\end{array}\right) \simeq \left(\begin{array}{ccc}\alpha_1&1&0\\
0&\alpha_1&0\\
0&0&\gamma_3\\
\end{array}\right),$ i.e., $\alpha_2=1,$ $ \alpha_3=\beta_1=\beta_3=\gamma_1=\gamma_2=0,$ and $\beta_2=
\alpha_1.$ By the restrictions (\ref{1.2}) and (\ref{1.3}), we obtain the following system:
$$(1+\alpha_1)\alpha_1=0, \quad 1+2\alpha_1=0,
$$
which has no solution. Therefore in this case we do not obtain any algebra.

 \textit{\bf{Case 3.3.}} Let $\left(\begin{array}{ccc}\alpha_1&\alpha_2&\alpha_3\\
\beta_1&\beta_2&\beta_3\\
\gamma_1&\gamma_2&\gamma_3
\end{array}\right) \simeq \left(\begin{array}{ccc}\alpha_1&1&0\\
0&\alpha_1&1\\
0&0&\alpha_1\\
\end{array}\right),$ i.e., $\alpha_2=\beta_3=1, \ \alpha_3=\beta_1=\gamma_1=\gamma_2=0, \ \beta_2=\gamma_3=
\alpha_1.$ Due to the restrictions (\ref{1.2}) and (\ref{1.3}), we have the following system:
$$(1+\alpha_1)\alpha_1=0, \quad 1+2\alpha_1=0,$$ which has no solution.

 \

Similarly, if $R_x \simeq \left(\begin{array}{ccc}
 \mu_1 & 1 & 0\\
 0 & \mu_1 & 0\\
 0 & 0 & \mu_3
 \end{array}\right),$ we obtain the algebras $\ll_{29}(\mu_3)-\ll_{40}.$ On the other hand, if
  $R_x\simeq \left(\begin{array}{ccc}
 \mu_1 & 1 & 0\\
 0 & \mu_1 & 1\\
 0 & 0 & \mu_1
 \end{array}\right),$
  the algebras $\ll_{41}, \ll_{42}$ are obtained.
 \end{proof}

  \begin{prop}
 Let $\ll$ be a $4$-dimensional solvable Leibniz algebra, whose nilradical is isomorphic to $\lambda_6.$
  Then $\ll$ is isomorphic to the following algebra
 $$\ll_{43}:
 \begin{cases}
 [e_1,e_1]=e_2,&[e_1,x]=e_1,\\
 [e_2,e_1]=e_3,& [e_2,x]=2e_1,\\
 [x,e_1]=-e_1,&[e_3,x]=3e_1,
 \end{cases}$$
 \end{prop}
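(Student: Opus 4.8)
The plan is to follow the scheme of Propositions~\ref{propo1}--\ref{4}: first pin down the action of the non-nilpotent derivation $R_x$ on the nilradical $N\cong\lambda_6$ using Proposition~\ref{derivations}, then recover the remaining brackets $[x,e_i]$ and $[x,x]$ from the right annihilator and the Leibniz identity, and finally remove the free parameters by changes of basis. First I would use Proposition~\ref{derivations}: in the fixed basis $\{e_1,e_2,e_3\}$ of $N$ one has $[e_1,e_1]=e_2$, $[e_2,e_1]=e_3$, and $R_x|_N$ is triangular with diagonal $a_1,2a_1,3a_1$, so that $[e_1,x]=a_1e_1+a_2e_2+a_3e_3$, $[e_2,x]=2a_1e_2+a_2e_3$, $[e_3,x]=3a_1e_3$ for suitable scalars. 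Non-nilpotency of $R_x$ forces $a_1\neq 0$, and after replacing $x$ by $a_1^{-1}x$ we may take $a_1=1$. Since the eigenvalues $1,2,3$ of $R_x|_N$ are distinct, a short computation shows that the eigenvector $\widehat e_1=e_1-a_2e_2+\frac{a_2^2-a_3}{2}e_3$ for the eigenvalue $1$ satisfies $[\widehat e_1,\widehat e_1]=e_2-a_2e_3$ and $[[\widehat e_1,\widehat e_1],\widehat e_1]=e_3$, and that these three vectors are eigenvectors of $R_x|_N$ for $1,2,3$; thus they form a basis realizing the law of $\lambda_6$ in which $R_x$ is diagonal, and after renaming we may assume $[e_1,x]=e_1$, $[e_2,x]=2e_2$, $[e_3,x]=3e_3$.

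Next, since $e_2=[e_1,e_1]$ and $e_3=[e_2,e_1]+[e_1,e_2]$ lie in the right annihilator $R(\ll)$, we get $[x,e_2]=[x,e_3]=0$, and from $[e_1,x]=e_1$ one checks that $R(\ll)=\langle e_2,e_3\rangle$. Using $[e_1,x]+[x,e_1]\in R(\ll)$ and $[x,x]\in R(\ll)$ we may write
\[
[x,e_1]=-e_1+\alpha_2e_2+\alpha_3e_3,\qquad [x,x]=\delta_2e_2+\delta_3e_3 .
\]
Imposing the Leibniz identity on the triple $(x,e_1,x)$ gives $\alpha_2=0$ and $\delta_2=2\alpha_3$, and the remaining instances of the Leibniz identity are then automatically satisfied. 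Finally, replacing $x$ first by $x-\frac{\delta_3}{3}e_3$ and then by $x-\alpha_3e_2$ — changes of basis that leave the relations $[e_i,x]=ie_i$ untouched — removes $\delta_3$, then $\alpha_3$ together with the surviving value of $[x,x]$, and one arrives at $\ll_{43}$. Since a single algebra appears, there is no non-isomorphism statement to verify.

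The step I expect to be the main obstacle is the diagonalization of $R_x|_N$: one must check that the eigenvectors can be chosen compatibly with the defining relations $[e_1,e_1]=e_2$, $[e_2,e_1]=e_3$ (that is, by an automorphism of $\lambda_6$), so that no additional correction of $x$ by an element of $N$ is needed. This works precisely because the off-diagonal parameter $a_2$ enters $R_x|_N$ in the pattern dictated by the derivation condition, which forces $[\widehat e_1,\widehat e_1]$ and $[[\widehat e_1,\widehat e_1],\widehat e_1]$ to be eigenvectors for $2$ and $3$ as soon as $\widehat e_1$ is taken to be the eigenvector for $1$. Everything after this step is routine bookkeeping with the Leibniz identity, entirely parallel to Propositions~\ref{propo1} and~\ref{4}.
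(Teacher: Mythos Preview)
Your argument is correct and follows exactly the template the paper indicates (``similar tools to the above propositions''): you read off $R_x|_N$ from the derivations of $\lambda_6$, normalize $a_1=1$, diagonalize $R_x|_N$ by an automorphism of $\lambda_6$, and then use the right annihilator together with the Leibniz identity on $(x,e_1,x)$ and two shifts of $x$ by elements of $N$ to kill $\alpha_3,\delta_2,\delta_3$. The only point worth recording is that your form of $\operatorname{Der}(\lambda_6)$, with the same parameter $a_2$ in positions $(1,2)$ and $(2,3)$, is the correct one (the $0$ in position $(1,2)$ of Proposition~\ref{derivations} is a misprint), and your eigenvector computation then goes through as written.
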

\begin{proof}
The algebra $\ll_{43}$ is obtained by using similar tools to the above propositions.
\end{proof}

 \begin{prop}
 Let $\ll$ be a $4$-dimensional solvable Leibniz algebra, whose nilradical is isomorphic to $\lambda_5.$
 Then $\ll$ is isomorphic to the following algebra
 $$\ll_{44}:
 \begin{cases}
 [e_1,e_2]=e_3,& [e_1,x]=e_1,\\
 [e_2,e_1]=e_3,& [e_2,x]=e_2,\\
 [x,e_1]=-e_1,&  [e_3,x]=2e_3,\\
 [x,e_2]=-e_2.
 \end{cases}$$
 \end{prop}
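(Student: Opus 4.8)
The plan is to follow the same scheme used in Propositions \ref{propo1} and \ref{4}: start from the known part of the multiplication table coming from $\lambda_5$ and the shape of $Der(\lambda_5)$ given in Proposition \ref{derivations}, then use the Leibniz identity and admissible changes of basis to whittle down the remaining structure constants. Concretely, by Proposition \ref{derivations} the non-nilpotent derivation $R_x$ can be taken (after a basis change inside $N=\{e_1,e_2,e_3\}$) to have the form $\mathrm{diag}$-type
$$\begin{pmatrix} a_1 & 0 & a_3 \\ 0 & b_2 & b_3 \\ 0 & 0 & a_1+b_2 \end{pmatrix},$$
so that
$$[e_1,x]=a_1e_1+a_3e_3,\quad [e_2,x]=b_2e_2+b_3e_3,\quad [e_3,x]=(a_1+b_2)e_3,$$
together with $[e_1,e_2]=[e_2,e_1]=e_3$. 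Since $R_x$ is non-nilpotent and $e_3\in R(\ll)$ (it spans $\ll^2\cap$ the annihilator), one argues $(a_1,b_2)\neq(0,0)$; a further change of $x$ and of the $e_i$ lets one normalize the eigenvalues — the natural target being $a_1=b_2=1$, giving $[e_3,x]=2e_3$, and killing $a_3,b_3$ by translating $e_1,e_2$ by multiples of $e_3$.

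Next I would bring in the right annihilator constraints: for each $y\in N$ the element $[y,x]+[x,y]$ lies in $R(\ll)=\langle e_3\rangle$, so $[x,e_1]=-e_1+(\ast)e_3$, $[x,e_2]=-e_2+(\ast)e_3$, $[x,e_3]\in\langle e_3\rangle$, and $[x,x]\in\langle e_3\rangle$. Then the Leibniz identities $[x,[e_1,x]]$, $[x,[e_2,x]]$, $[x,[e_3,x]]$, $[x,[x,x]]$, and crucially $[x,[e_1,e_2]]$ and $[x,[e_2,e_1]]$ (which tie $[x,e_3]$ to $[x,e_1],[x,e_2]$ via $e_3$), pin down the surviving parameters. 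One expects the remaining $e_3$-components of $[x,e_1]$, $[x,e_2]$, $[x,e_3]$, $[x,x]$ to be forced to zero (or removable by a last translation $x\mapsto x-\kappa e_3$), leaving exactly
$$[x,e_1]=-e_1,\quad [x,e_2]=-e_2,\quad [x,e_3]=-2e_3?$$
— and here is the point worth checking carefully: in the listed algebra $\ll_{44}$ the product $[x,e_3]$ is \emph{absent}, not $-2e_3$, which means the identity $[e_1,[e_2,x]]=[[e_1,e_2],x]-[[e_1,x],e_2]$ must be doing the work of fixing $[x,e_3]=0$ rather than $-2e_3$; so the symmetric/antisymmetric interplay between $[e_1,e_2]$ and $[e_2,e_1]$ (both equal to $e_3$, unlike the $\lambda_3$ case where they differ by a sign) is exactly what distinguishes this computation from Proposition \ref{propo1}.

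The main obstacle is therefore the bookkeeping at the final stage: unlike $\lambda_3$, here $\lambda_5$ has $[e_1,e_2]=[e_2,e_1]$, so the Leibniz identities applied to mixed brackets give genuinely different linear relations among the $e_3$-coefficients, and one must verify that the only surviving solution (after exhausting all permissible changes of basis fixing the normalized form of $R_x$ and of $\{e_1,e_2,e_3\}$) is the single algebra $\ll_{44}$, with no residual one-parameter families and no extra sporadic algebras. I would also need to confirm non-degeneracy of the normalization — i.e. that the sub-case $(a_1,b_2)=(0,0)$ (forcing $R_x$ nilpotent, hence impossible) and the Jordan-block alternative for the restriction of $R_x$ to the plane $\langle e_1,e_2\rangle$ are both genuinely ruled out by the structure of $\lambda_5$ — and finally observe that $\ll_{44}$ is not isomorphic to any previously obtained $\ll_i$, e.g. by comparing $\dim\ll^2$, $\dim R(\ll)$, and the nilradical type. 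Since all these steps are the same routine manipulations carried out in detail in Propositions \ref{propo1} and \ref{4}, the write-up can legitimately say ``the algebra $\ll_{44}$ is obtained by using similar tools to the above propositions'' once the distinguishing identity computation has been checked.
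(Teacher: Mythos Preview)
Your overall scheme matches the paper's (the paper itself writes only ``by using similar tools to the above propositions''), but there is one genuine gap in your plan, and it sits exactly at the step that makes the result a \emph{single} algebra rather than a one-parameter family.

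You write that ``a further change of $x$ and of the $e_i$ lets one normalize the eigenvalues --- the natural target being $a_1=b_2=1$''. This is not correct: no admissible change of basis inside $\lambda_5$ can alter the ratio $b_2/a_1$ of the diagonal entries of $R_x$ (the automorphisms of $\lambda_5$ act on $\langle e_1,e_2\rangle$ by invertible maps preserving the symmetric form $(u,v)\mapsto [u,v]\in\langle e_3\rangle$, and these cannot mix distinct eigenvalues of $R_x$). What actually forces $a_1=b_2$ is the Leibniz identity
\[
0=[x,e_3]=[x,[e_1,e_2]]=[[x,e_1],e_2]-[[x,e_2],e_1]=(-a_1+b_2)\,e_3,
\]
where the left-hand side vanishes because $e_3\in R(\ll)$ (indeed $2e_3=[e_1,e_2]+[e_2,e_1]\in R(\ll)$, a fact you already noted). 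This single identity is the heart of the proof: it kills the would-be parameter and, after rescaling $x$, gives $a_1=b_2=1$. The remaining constants $a_3,b_3$ are removed by $e_i\mapsto e_i-a_3e_3$ (resp.\ $-b_3e_3$), the $e_3$-tails $\alpha_3,\beta_3$ of $[x,e_1],[x,e_2]$ die from $[x,[e_i,x]]$, and $\gamma_3$ in $[x,x]=\gamma_3e_3$ is absorbed by $x\mapsto x-\tfrac{\gamma_3}{2}e_3$.

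A second, smaller point: your discussion of why $[x,e_3]=0$ is overcomplicated. It is immediate from $e_3\in R(\ll)$; no Leibniz identity is needed for that step, and the identity $[e_1,[e_2,x]]=[[e_1,e_2],x]-[[e_1,x],e_2]$ you invoke is in fact automatically satisfied and plays no role. Once you reroute the argument so that the identity $[x,[e_1,e_2]]$ is what pins down $a_1=b_2$ (rather than an alleged basis change), your sketch becomes a complete proof in the spirit of Propositions~\ref{propo1} and~\ref{4}.
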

\begin{proof}
The algebra $\ll_{44}$ is obtained by using similar tools to the above propositions.
\end{proof}

\begin{rem} We have used a computer program, implemented in the software \textit{Mathematica}, which allows us to check that the obtained algebras in this work are no isomorphic. The algorithmic method of this program can be found in detail in \cite{Computing}.
\end{rem}

%
%
%
%
%
%
%
%
%
%

\

{\sc Elisa M. Ca\~{n}ete.}  Dpto. Matem\'{a}tica Aplicada I.
Universidad de Sevilla. Avda. Reina Mercedes, s/n. 41012 Sevilla.
(Spain), e-mail: \emph{elisacamol@us.es}

\

{\sc Abror Kh. Khudoyberdiyev.} Institute of Mathematics, Do'rmon yo'li str. 29, 100125, Tashkent
(Uzbekistan), e-mail: \emph{khabror@mail.ru}

\end{document}